 \newtheorem{thm}{Theorem}[section]
 \newtheorem{cor}[thm]{Corollary}
 \newtheorem{lem}[thm]{Lemma}
 \newtheorem{prop}[thm]{Proposition}
\theoremstyle{definition}
 \newtheorem{defn}[thm]{Definition}
 \newtheorem{rem}[thm]{Remark}
 \newtheorem{ex}[thm]{Example}
 \theoremstyle{plain}
\newtheorem*{Theorem A}{Theorem A} 
\newtheorem*{Theorem B}{Theorem B} 
\def\C{\mathbb{C}}
\def\N{\mathbb{N}}
\def\R{\mathbb{R}}
\def\Z{\mathbb{Z}}
\let\e\varepsilon
\let\le\leqslant
\let\ge\geqslant
\def\cchi{{\raise 1pt\hbox{$\chi$}}}
\newcommand{\mylabel}[2]
    {\protected@write\@auxout{}{\string\newlabel{#1}{{#2}{\thepage}%
      {\@currentlabelname}{\@currentHref}{}}}}}%
\newcommand{\mylabel}[2]
    {\protected@write\@auxout{}{\string\newlabel{#1}{{#2}{\thepage}}}}}
\theoremstyle{plain}
\newtheorem*{theorem A}{Theorem A} 
\newtheorem*{theorem B}{Theorem B} 
\begin{document}

\title[$\Gamma$-supercyclicity of families of translates]
{$\Gamma$-supercyclicity of families of translates in weighted $L^p$-spaces on locally compact groups}

\author{Arafat Abbar}
\address{Arafat Abbar, 
LAMA, Univ Gustave Eiffel, Univ Paris Est Creteil, CNRS, F-77447, Marne-la-Vall\'{e}e, France}
\email{arafat.abbar@univ-eiffel.fr}

\author{Yulia Kuznetsova}
\address{Yulia Kuznetsova, 
University Bourgogne Franche-Comt\'{e}, 16 route de Gray, 25030 Besan\c con, France}
\email{yulia.kuznetsova@univ-fcomte.fr}

\keywords{$S$-density, hypercyclicity, locally compact groups, supercyclicity, $\Gamma$-supercyclicity, translation semigroup, weighted spaces.}

\subjclass[2010]{47A16, 43A15, 37C85}

\begin{abstract}
Let $\omega$ be a weight function defined on a locally compact group $G$,  $1\le p<+\infty$, $S\subset G$ and let us assume that for any $s\in S$, the left translation operator $T_s$ is continuous from the weighted $L^p$-space $L^p(G,\omega)$ into itself. For a given set $\Gamma\subset\mathbb{C}$, a vector $f\in L^p(G,\omega)$ is said to be $(\Gamma,S)$-dense if the set $\{ \lambda T_sf:\, \lambda\in \Gamma, \,s\in S\}$ is dense in $L^p(G,\omega)$. In this paper, we characterize the existence of $(\Gamma,S)$-dense vectors in $L^p(G,\omega)$ in  terms of the weight and the set $\Gamma$.
\end{abstract}

\maketitle

\section{Introduction}
Let $G$ be a locally compact group with a left Haar measure $\mu$, and let $\omega:G\to\mathbb{R}_+$ be a weight on $G$, that is, a positive locally $p$-integrable function. Consider now the weighted $L^p$-space, $1\le p<+\infty$:
$$
L^p(G,\omega) :=\left\{ f: G\to\C:\, f\,\text{ measurable}, \,
\int_{G}|f(t)|^p\omega(t)^p\,d\mu(t)<\infty \right\}
$$
endowed with the norm
$$
\|f\|_{p,\omega}:=\left( \int_{G}|f(t)|^p\omega(t)^p\,d\mu(t)\right)^{1/p}.
$$
For $s\in G$, the left translation operator $T_s$ is defined by
$$
(T_sf)(t)=f(s^{-1}t),\quad t\in G,\ f\in L^p(G,\omega).
$$
It is known that $T_s$ maps $L^p(G,\omega)$ into itself and is bounded if and only if
 \[M(s):=\underset{t\in G}{\mathrm{ess}\sup}\dfrac{\omega(st)}{\omega(t)}<+\infty,\]
and $M(s)$ is equal to the norm of $T_s$ in this case. Let $S$ be a subset of $G$. We say that $\omega$ is an \textit{$S$-admissible} weight if, for all $s\in S$, $M(s)<+\infty$. In this case the following notion makes sense: A function $f\in L^p(G,\omega)$ is called \textit{$S$-dense} if its $S$-orbit
$$
Orb_S(f):=\{ T_sf:\,s\in S\}
$$
is dense in $L^p(G,\omega)$.

Whether $S$-dense functions exist, depends on $S$ and on the weight. The first criterion of the existence of $S$-dense functions in this context was obtained by H.~Salas in \cite{S1}: In the case $G=\mathbb{Z}$ and $S=\mathbb{N}$, a necessary and sufficient condition is $\liminf\{ \omega(n+q)+\omega(-n+q): n\in \mathbb{N}\}=0$ for all $q\in\N$, independently of $p$.

In \cite{AK}, E. Abakumov and Y. Kuznetsova gave an $S$-density criterion in the case of a general group $G$ and its subset  $S$. The (necessary and sufficient) condition involves series of local $p$-norms of $\omega$, similar to that of Theorem \ref{theorem A} below, and is in general not simplifiable. In the case when the subgroup generated by $S$ is abelian, the condition simplifies to the following: {\it For every compact set $F \subset G$ and any given $\delta>0$, there exists $s \in S$ and a compact set $K \subset F$ such that $\mu(F\setminus K)<\delta$ and $\underset{sK\cup s^{-1}K}{\mathrm{ess}\sup}\omega <\delta$.} This is a generalization of the result of Salas,
as well as that of W. Desch et al. \cite{DSW} ($G=\mathbb{R}$, $S=\mathbb{R}\setminus\mathbb{R}_{+}$) and C-C. Chen \cite{C1} (a single translation operator $T_{s_0}$ on $L^p(G,\omega)$).

An interesting necessary condition for the existence of an $S$-dense vector is that $G$ needs to be second-countable, that is, to have a countable base of topology \cite[Proposition 3]{AK}, which is equivalent to saying that $L^p(G,\omega)$ is separable (Recall that $S$ and thus the orbit of $f$ need not be countable).

If $S$ is the sub-semigroup generated by a single point $s_0\in G$, then $S$-density is equivalent to the hypercyclicity of $T_{s_0}$. Recall that an operator $T$ defined on a Banach space $X$ into itself is \textbf{hypercyclic}, if there exists $x\in X$ such that the orbit of $x$, i.e., $\mathrm{Orb}(x,T):=\{ T^nx:\, n\in\mathbb{N}\}$ is dense in $X$. The result of Salas \cite{S1} is thus a criterion of hypercyclicity of the backward shift operator on $L^p(\mathbb{Z},\omega)$. We refer to \cite{BM, Gr, GP} for more information about hypercyclicity and universality.


A bounded linear operator $T$ on a Banach space $X$ is said to be \textbf{supercyclic} if there exists a vector  $x\in X$ whose projective orbit, i.e., $\mathrm{Orb}(\mathbb{C}x, T):=\{ \lambda T^nx:\, \lambda\in\mathbb{C},\, n\in\mathbb{N}\}$ is dense in $X$. A characterization of supercyclicity of weighted shift operators was also obtained by Salas in \cite{S2}, while M. Matsui et al. \cite{MYT} characterized the
supercyclicity of one-parameter translation semigroups. Moreover, if $S$ is the sub-semigroup generated by $s_0\in G$, then a characterization of supercyclicity of $T_{s_0}$ was also obtained by C-C. Chen \cite{C2}.

For $\Gamma\subset\mathbb{C}$, $\Gamma$-supercyclicity is a recent intermediate notion between hypercyclicity and supercyclicity introduced in \cite{CEM}. An operator $T$ defined on a Banach space $X$ is \textbf{$\Gamma$-supercyclic} if there exists $x\in X$ such that the orbit of $\Gamma x$, i.e., $\mathrm{Orb}(\Gamma x, T):=\{ \lambda T^nx:\, \lambda\in\Gamma,\, n\in\mathbb{N}\}$ is dense in $X$.
Instead of a single operator, one can consider a family of translation operators parametrized by a subset $S\subset G$. On this way we come to the following definition, which can be regarded as $\Gamma$-supercyclicity of a family of translates:
\begin{defn}
 We say that $f\in L^p(G,\omega)$ is \textit{$(\Gamma,S)$-dense} in $L^p(G,\omega)$ if its $(\Gamma,S)$-orbit, i.e.,
\[\mathrm{Orb}_{S}(\Gamma f):=\{\lambda T_sf:\,\lambda\in\Gamma, \,s\in S\},\]
is dense in $L^p(G,\omega)$.
\end{defn}

We have clearly a chain of implications:
\[S\text{-density }\Rightarrow (\Gamma,S)\text{-density }\Rightarrow (\mathbb{C},S)\text{-density}.\]
In particular, if $\Gamma$ is reduced to a non-zero point, then $S$-density coincides with $(\Gamma,S)$-density. Furthermore, if $S$ is the sub-semigroup generated by $s_0\in G$, then $(\Gamma,S)$-density coincides with $\Gamma$-supercyclicity of $T_{s_0}$. 

Besides the case of a trivial group $G=\{ e\}$, the group must be non-compact second-countable in order to have $(\Gamma,S)$-dense vectors for some $p,\omega,S,\Gamma$. This is proved in Section \ref{sec-group}. Under the above assumptions on the group, we get the following density criterion extending the results of \cite{AK}. Denote, for a measurable function $f$ on $G$ and a subset $K$ of $G$,
$$
\| f\|_{p,K}:=\left( \int_{K}|f(t)|^p\, d\mu(t)\right)^{1/p}.
$$
\begin{theorem A}\label{theorem A}
Let $G$ be a second-countable locally compact non-compact group. Let
$S\subset G$, $1\le p<+\infty$, $\Gamma\subset\mathbb{C}$ be such that $\Gamma\setminus\{0\}$ is non-empty,
and let $\omega$ be a locally $p$-integrable $S$-admissible weight on $G$. Then the
following conditions are equivalent:
\begin{enumerate}[label={$(\arabic*)$}]
\item There is a $(\Gamma,S)$-dense vector in $L^p(G,\omega)$.
\item For every increasing sequence of compact subsets  $( F_n)_{n\ge1}$ of $G$ of positive measure and for every sequence  $(\delta_n)_{n\ge1}$ of positive numbers, there are sequences $(s_n)_{n\ge1}\subset S
$, $(\lambda_n)_{n\ge1}\subset\Gamma\setminus\{0\}$ and a sequence of compact subsets $K_n\subset F_n$ such that the sets $s_{n}^{-1}F_n$ are pairwise disjoint, $\mu(F_n\setminus K_n)<\delta_n$ and
\[
\sum_{n,k\ge0;n\neq k}\dfrac{|\lambda_n|^p}{|\lambda_k|^p} 
\|\omega\|_{p,s_ns_{k}^{-1}K_{k}}^p<+\infty,
\]
with $s_0=e$ being the identity, $\lambda_0=1$ and $K_0=\emptyset$.
\end{enumerate}
\mylabel{theorem A}{A}
\end{theorem A}
This theorem is stated again, with yet another equivalent condition, and proved in Section \ref{sec-proof-A}.

In \cite{Sh}, S.~Shkarin proved the equivalence of $\mathbb{R}_{+}$-supercyclicity and supercyclicity. By Theorem \ref{theorem A}, we obtain the following generalization of this result as a corollary:
\begin{cor}
Let $|\Gamma|$ denote the set $\{ |\lambda|:\,\lambda\in\Gamma\}$. $L^p(G,\omega)$ has a $(\Gamma,S)$-dense vector if and only if it has a $(|\Gamma|,S)$-dense vector. In particular,
$L^p(G,\omega)$ has an $(\mathbb{R}_{+},S)$-dense vector if and only if it has a $(\mathbb{C},S)$-dense vector.
\end{cor}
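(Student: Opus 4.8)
The plan is to read the corollary off directly from the equivalence $(1)\Leftrightarrow(2)$ of Theorem~\ref{theorem A}, exploiting the fact that condition~$(2)$ involves the set $\Gamma$ only through the moduli of its elements. Indeed, apart from the requirement $(\lambda_n)_{n\ge1}\subset\Gamma\setminus\{0\}$, the only place where $\Gamma$ enters $(2)$ is through the scalars $|\lambda_n|$ appearing in the convergent series $\sum |\lambda_n|^p/|\lambda_k|^p\,\|\omega\|_{p,s_ns_k^{-1}K_k}^p$; the conditions on $(s_n)$, $(F_n)$ and $(K_n)$ (pairwise disjointness of the $s_n^{-1}F_n$, compactness, the bound $\mu(F_n\setminus K_n)<\delta_n$) make no reference to $\Gamma$, and the boundary conventions $\lambda_0=1$, $s_0=e$, $K_0=\emptyset$ are the same for every choice of $\Gamma$.

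First I would show that condition~$(2)$ holds for $\Gamma$ if and only if it holds for $|\Gamma|$. Given sequences witnessing $(2)$ for $\Gamma$, with $(\lambda_n)\subset\Gamma\setminus\{0\}$, I set $\mu_n:=|\lambda_n|$; then $\mu_n\in|\Gamma|\setminus\{0\}$ and $|\mu_n|=|\lambda_n|$, so the same sets $(s_n)$, $(K_n)$ together with $(\mu_n)$ witness $(2)$ for $|\Gamma|$, the series being literally unchanged. Conversely, given $(\mu_n)\subset|\Gamma|\setminus\{0\}$ witnessing $(2)$ for $|\Gamma|$, I lift each $\mu_n$ to some $\lambda_n\in\Gamma$ with $|\lambda_n|=\mu_n$; this is possible by the very definition of $|\Gamma|$, and $\lambda_n\neq0$ because $\mu_n\neq0$, so $(\lambda_n)\subset\Gamma\setminus\{0\}$ and again the series is unchanged.

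With this equivalence of conditions in hand, I would finish as follows. Since $\Gamma\setminus\{0\}\neq\emptyset$ is equivalent to $|\Gamma|\setminus\{0\}\neq\emptyset$, Theorem~\ref{theorem A} applies to both $\Gamma$ and $|\Gamma|$ under its standing hypotheses, and its equivalence $(1)\Leftrightarrow(2)$ transfers the above to the statement that a $(\Gamma,S)$-dense vector exists if and only if a $(|\Gamma|,S)$-dense vector exists. (In the degenerate case $\Gamma\subset\{0\}$ neither kind of dense vector exists, so the equivalence holds vacuously.) For the final assertion I would simply specialise to $\Gamma=\mathbb{C}$ and observe that $|\mathbb{C}|=[0,+\infty)=\mathbb{R}_{+}$, so that $(\mathbb{C},S)$-density is equivalent to $(\mathbb{R}_{+},S)$-density.

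The argument is little more than a bookkeeping observation, so there is no serious obstacle; the only point that deserves attention is the lifting step, namely the choice of $\lambda_n\in\Gamma$ with prescribed modulus $\mu_n$, which is legitimate precisely because $\mu_n$ lies in $|\Gamma|$, together with the harmless remark that whether or not $0\in\Gamma$ is irrelevant to density, since adjoining the single point $0$ to an orbit cannot affect whether it is dense.
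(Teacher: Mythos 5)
Your proof is correct and is exactly the route the paper intends: the corollary is presented as an immediate consequence of Theorem \ref{theorem A}, since condition $(2)$ there sees $\Gamma$ only through the moduli $|\lambda_n|$, and your two-way transfer (projecting $\lambda_n\mapsto|\lambda_n|$ and lifting $\mu_n\in|\Gamma|\setminus\{0\}$ back to some $\lambda_n\in\Gamma$ of the same modulus) is precisely that observation. The attention you give to the degenerate case $\Gamma\subset\{0\}$ and to whether $0\in\mathbb{R}_{+}$ is harmless and does not change the argument.
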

Let $T$ be an operator on a Banach space $X$. If \;$\Gamma\subset\mathbb{C}$ is such that $\Gamma\setminus\{0\}$ is non-empty, bounded and bounded away from zero, we know that $T$ is $\Gamma$-supercyclic if and only if $T$ is hypercyclic \cite{CEM}. Here we have the following similar result for a family of translation operators:
\begin{cor}
If\/ $\Gamma\setminus\{0\}$ is bounded and bounded away from zero, then $L^p(G,\omega)$ has an $S$-dense vector if and  only if it has a $(\Gamma,S)$-dense vector. 
\end{cor}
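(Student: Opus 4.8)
The plan is to derive both implications from Theorem~A, using that, by the remark following the chain of implications, $S$-density is nothing but $(\{1\},S)$-density (the case where $\Gamma$ is reduced to the nonzero point $1$). The easy direction needs almost nothing: if $L^p(G,\omega)$ has an $S$-dense vector $f$, I would fix any $\lambda_0\in\Gamma\setminus\{0\}$ (available by hypothesis) and note that $\lambda_0\,Orb_S(f)$ is dense, being a nonzero scalar multiple of a dense set; since $\lambda_0\,Orb_S(f)\subset\mathrm{Orb}_{S}(\Gamma f)$, the same $f$ is $(\Gamma,S)$-dense. This is exactly the first arrow of the chain recorded above and uses only $\Gamma\setminus\{0\}\neq\emptyset$.

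For the converse I would invoke the criterion of Theorem~A twice, once for the given $\Gamma$ and once for $\{1\}$. Since $\Gamma\setminus\{0\}$ is bounded and bounded away from zero, fix $0<c\le C<\infty$ with $c\le|\lambda|\le C$ for all $\lambda\in\Gamma\setminus\{0\}$. The key observation is that, whatever sequence $(\lambda_n)_{n\ge1}\subset\Gamma\setminus\{0\}$ appears in condition~$(2)$ (with the convention $\lambda_0=1$), each coefficient $|\lambda_n|^p/|\lambda_k|^p$ is pinched between two positive constants depending only on $c,C,p$; concretely one gets the uniform lower bound $|\lambda_n|^p/|\lambda_k|^p\ge m:=\min\{c^p,\,C^{-p},\,(c/C)^p\}>0$ for all $n\neq k$, the three regimes corresponding to $k=0$, to $n=0$, and to $n,k\ge1$.

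Assuming now a $(\Gamma,S)$-dense vector exists, Theorem~A gives condition~$(2)$ for $\Gamma$. Given any increasing sequence $(F_n)_{n\ge1}$ of compact sets of positive measure and any $(\delta_n)_{n\ge1}$, it supplies $(s_n)\subset S$, $(\lambda_n)\subset\Gamma\setminus\{0\}$ and compacts $K_n\subset F_n$ with the $s_n^{-1}F_n$ pairwise disjoint, $\mu(F_n\setminus K_n)<\delta_n$, and the $\Gamma$-weighted series finite. Using the lower bound $m$ on the coefficients then yields
\[
\sum_{n,k\ge0;\,n\neq k}\|\omega\|_{p,s_ns_{k}^{-1}K_{k}}^p \;\le\; \frac{1}{m}\sum_{n,k\ge0;\,n\neq k}\frac{|\lambda_n|^p}{|\lambda_k|^p}\,\|\omega\|_{p,s_ns_{k}^{-1}K_{k}}^p \;<\;+\infty,
\]
so the same data $(s_n),(K_n)$ verify condition~$(2)$ for $\Gamma=\{1\}$, whence an $S$-dense vector exists by Theorem~A. (Throughout I would tacitly assume $G$ non-compact and second-countable; otherwise, by the remarks preceding Theorem~A, neither kind of dense vector exists and the equivalence is vacuous.) The argument reduces to this one-line comparison of the two series once Theorem~A is available, so I expect no genuine obstacle beyond the bookkeeping for the distinguished entries $s_0=e,\ \lambda_0=1,\ K_0=\emptyset$, which is precisely why the constant $m$ is taken as a minimum over the three regimes.
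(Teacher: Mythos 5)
Your proof is correct and follows exactly the route the paper intends: the corollary is stated as an immediate consequence of Theorem~A, and your two-sided pinching of the coefficients $|\lambda_n|^p/|\lambda_k|^p$ (with the three regimes $k=0$, $n=0$, $n,k\ge1$) is precisely how condition~$(2)$ for $\Gamma$ transfers to condition~$(2)$ for $\{1\}$, i.e.\ to $S$-density. The easy direction via the chain of implications and the vacuous-case remark are likewise consistent with the paper.
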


If $\Gamma$ is unbounded or zero is its limit point, then $L^p(G,\omega)$ may have a $(\Gamma,S)$-dense vector, but no $S$-dense vectors, see Example \ref{no-S-dense}. 

If the subgroup generated by $S$ is abelian, Theorem \ref{theorem A} simplifies to the following, under the same assumptions on $p,G,S,\Gamma$ and $\omega$ as in Theorem \ref{theorem A}:
\begin{theorem B}\label{theorem B}
 If the subgroup generated by $S$ is abelian, then the following conditions are equivalent:
\begin{enumerate}[label={$(\arabic*)$}]
\item There is a $(\Gamma,S)$-dense vector in  $L^p(G,\omega)$.
\item For any compact subset $F\subset G$ and $\e>0$, there are $s\in S$, $\lambda\in\Gamma\setminus\{0\}$ and a compact subset $E\subset F$ such that $\mu(F\setminus E)<\e$ and
\[|\lambda|\underset{t\in E}{\mathrm{ess }\sup }\,\omega(st)<\e\quad\text{ and }\quad \dfrac{1}{|\lambda|}\underset{t\in E}{\mathrm{ess }\sup}\,\omega(s^{-1}t)<\e.\]
\end{enumerate}
\mylabel{theorem B}{B}
\end{theorem B}
For the proof of Theorem \ref{theorem B}, with an additional equivalent condition, see Section \ref{sec-abelian}. It is worth mentioning that the condition (2) above does not depend on $p$, but we need the usual assumption of local $p$-summability of $\omega$ which does depend on $p$.

In the case where $\Gamma$ is equal to one of these sets: the complex plane, $[0,1]$, $[1,+\infty[$, or a set which is bounded and bounded away from zero, we get a complete characterization of $(\Gamma,S)$-density only in terms of the weight, see Corollary \ref{26}.

And finally, in the case when all translations are bounded (and not only by $s\in S$), the criterion is as follows:
\begin{prop}
Let $G$ be abelian and let $\omega$ be a continuous $G$-admissible weight. Then there is a $(\Gamma,S)$-dense vector in $L^p(G,\omega)$ if and only if there exist two sequences $(s_n)_{n\ge1}\subset S$ and $(\lambda_{n})_{n\ge1}\subset\Gamma\setminus\{0\}$ such that
\[\underset{n\to+\infty}{\lim}\max\left\{  |\lambda_n|\omega(s_n)\,;\,\frac{1}{|\lambda_n|}\omega(s_n^{-1})\right\} =0.\]
\label{3}
\end{prop}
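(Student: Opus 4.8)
The plan is to read Proposition~\ref{3} off Theorem~\ref{theorem B}. Since $G$ is abelian, the subgroup generated by $S$ is abelian; a continuous weight is automatically locally $p$-integrable, and $G$-admissibility is stronger than $S$-admissibility, so Theorem~\ref{theorem B} applies (keeping the standing hypothesis that $G$ is second-countable and non-compact). It then suffices to prove that condition~(2) of Theorem~\ref{theorem B} is equivalent to the existence of $(s_n)\subset S$ and $(\lambda_n)\subset\Gamma\setminus\{0\}$ with $\max\{|\lambda_n|\omega(s_n),\,|\lambda_n|^{-1}\omega(s_n^{-1})\}\to0$. The device that lets one pass between the essential suprema $\operatorname*{ess\,sup}_{t\in E}\omega(s^{\pm1}t)$ appearing in Theorem~\ref{theorem B}(2) and the single values $\omega(s_n),\omega(s_n^{-1})$ is the local boundedness of the operator-norm function $M$, which is where continuity of $\omega$ and full $G$-admissibility enter.

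First I would establish the key lemma: \emph{a continuous $G$-admissible weight has $M$ bounded on a neighbourhood of $e$, hence on every compact set.} Because $\omega$ is continuous and positive, for each fixed $t$ the map $s\mapsto\omega(st)/\omega(t)$ is continuous, so $M$ is a supremum of continuous functions and therefore lower semicontinuous; consequently each $A_N=\{s:M(s)\le N\}$ is closed, while $G$-admissibility gives $\bigcup_N A_N=G$. As $G$ is a Baire space, some $A_N$ has nonempty interior $U$, and for a fixed $u_0\in U$ submultiplicativity of $M$ (it is an operator norm, $T_{ab}=T_aT_b$) yields $M(uu_0^{-1})\le M(u)M(u_0^{-1})\le N\,M(u_0^{-1})$ on the neighbourhood $Uu_0^{-1}$ of $e$. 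Since $M$ is thus bounded near $e$, submultiplicativity makes it bounded near every point, hence bounded on every compact set. Continuity of $\omega$ also upgrades the almost-everywhere inequality $\omega(as)\le M(a)\omega(s)$ to hold for \emph{all} $s$, which I use below.

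For the implication ``sequences $\Rightarrow$ (2)'' I would, given a compact $F$ and $\e>0$, simply take $E=F$, $s=s_n$, $\lambda=\lambda_n$ for large $n$. Since $G$ is abelian, $\omega(s_nt)=\omega(ts_n)\le M(t)\omega(s_n)$, so with $C_F:=\sup_{t\in F}M(t)<\infty$ one gets $|\lambda_n|\operatorname*{ess\,sup}_{t\in F}\omega(s_nt)\le C_F|\lambda_n|\omega(s_n)\to0$ and, symmetrically, $|\lambda_n|^{-1}\operatorname*{ess\,sup}_{t\in F}\omega(s_n^{-1}t)\le C_F|\lambda_n|^{-1}\omega(s_n^{-1})\to0$; both fall below $\e$ eventually and $\mu(F\setminus E)=0$. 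For the converse ``(2) $\Rightarrow$ sequences'', using that $G$ is second-countable hence metrizable, I fix compact neighbourhoods $V_n\downarrow\{e\}$ of positive measure inside a fixed compact neighbourhood $F$ of $e$, and apply (2) with $F$ and $\e_n:=\min(1/n,\mu(V_n)/2)$; this produces $s_n,\lambda_n,E_n$ with $\mu(F\setminus E_n)<\mu(V_n)/2$, whence $\mu(V_n\cap E_n)>0$. I pick $t_n\in V_n\cap E_n$ (so $t_n\to e$) lying outside the two null sets on which the continuous functions $\omega(s_n^{\pm1}\,\cdot\,)$ exceed their essential suprema over $E_n$. Then $\omega(s_n)\le M(t_n^{-1})\omega(s_nt_n)$ and $\omega(s_n^{-1})\le M(t_n^{-1})\omega(s_n^{-1}t_n)$, and since $t_n^{-1}\to e$ the lemma gives $M(t_n^{-1})\le C$ eventually, so that $\max\{|\lambda_n|\omega(s_n),|\lambda_n|^{-1}\omega(s_n^{-1})\}<C/n\to0$.

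The main obstacle is precisely the lemma together with the matching measure-theoretic bookkeeping. One must show that the finite-everywhere but merely lower semicontinuous function $M$ is locally bounded (the Baire-category step), and then reconcile the essential suprema of Theorem~\ref{theorem B}(2), which ignore null sets, with honest point values: continuity of $\omega$ makes this possible, but only if the points $t_n$ realising the suprema can be forced to converge to $e$, so that the amplification factors $M(t_n^{-1})$ remain bounded. Securing $t_n\to e$ while keeping $V_n\cap E_n$ of positive measure is the delicate coupling of $\e_n$ to $\mu(V_n)$ carried out above; I expect this to be the part requiring the most care.
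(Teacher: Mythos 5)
Your proof is correct, and it takes a genuinely different route from the paper's. The paper obtains Proposition \ref{3} as the abelian case of Proposition \ref{MR}, which does not use Theorem \ref{theorem B} at all: sufficiency there is Theorem \ref{18}, and necessity is extracted directly from condition (2) of Theorem \ref{theorem A} applied with the constant sequence $F_n=F$, the passage from norms to point values being the inequality $\omega(s_n)\le C\inf_{t\in s_1^{-1}K_1}\omega(s_n t)\le C\mu(K_1)^{-1/p}\|\omega\|_{p,s_n s_1^{-1}K_1}$ (and its analogue for $s_n^{-1}$), in which only one fixed translate $s_1$ and fixed compact sets of positive measure occur; the local boundedness of $M$ and $R$ is cited from Edwards. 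You instead reduce to Theorem \ref{theorem B} --- legitimate, since $G$ abelian makes the subgroup generated by $S$ abelian --- and you pass from the essential suprema in its condition (2) to point values by picking $t_n\in V_n\cap E_n$ off the exceptional null sets, with $V_n$ shrinking to $e$ so that the amplification factors $M(t_n^{-1})$ remain bounded; and you reprove the Edwards citation from scratch by a Baire-category argument (lower semicontinuity plus submultiplicativity of $M$). What each approach buys: yours is self-contained on the local-boundedness lemma and stays entirely at the level of ess-sup computations, but it is confined to the abelian setting and requires the careful coupling $\e_n\le\mu(V_n)/2$ to force $t_n\to e$; the paper's yields the stronger Proposition \ref{MR}, valid for non-abelian $G$ with two-sided admissible weights, and avoids shrinking neighbourhoods altogether by trading infima over sets of positive measure against $L^p$-norms. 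Both arguments ultimately rest on the same core machinery, namely Theorem \ref{theorem A} together with Theorem \ref{18} (the latter being exactly what proves $(2)\Rightarrow(1)$ in Theorem \ref{theorem B}).
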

Proposition \ref{3} is a consequence of Proposition \ref{MR} below when $G$ is abelian. In the case where $G$ is abelian, and $\Gamma$ is equal  to one the particular mentioned above sets,  we get a complete characterization of $(\Gamma,S)$-density only in terms of the weight, see Corollary \ref{29}. 

\section{Restrictions on the group}\label{sec-group}

It has been shown in \cite{AK} that $S$-dense vectors can exist in $L^p(G,\omega)$ only if $G$ is non-compact second-countable. For $(\Gamma,S)$-density, the restrictions are the same, even if the proofs have to be changed.

In this section and further on we denote by $\cchi_K$ the characteristic function of a set $K$, and by $e$ the identity of $G$.

\begin{lem}
Let $X$ be a non-separable Banach space and let $0<M<1$. Then there exist an uncountable set $\{ x_\alpha\}_{\alpha\in I}\subset X$ of norm 1 vectors such that for all $\alpha_1,...,\alpha_n\in I$ and $c_1,...,c_n\in\mathbb{C}\setminus\{0\}$,
$$
\|\sum_{i=1}^{n}c_ix_{\alpha_i}\|> M\min\{|c_i|:\, 1\le i\le n\}.
$$
\label{8}
\end{lem}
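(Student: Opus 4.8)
The plan is to construct the family by transfinite recursion along the first uncountable ordinal $\omega_1$, choosing each new vector to be nearly orthogonal, in the sense of Riesz's lemma, to the closed linear span of all vectors chosen before it. First I would fix once and for all a real number $\theta$ with $M<\theta<1$; the slack between $M$ and $1$ is precisely what will yield the strict inequality at the end.

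For the recursion, suppose that for some $\alpha<\omega_1$ unit vectors $\{x_\beta\}_{\beta<\alpha}$ have already been chosen. Since $\alpha$ is a countable ordinal, the set $\{x_\beta:\beta<\alpha\}$ is countable, so $Y_\alpha:=\overline{\mathrm{span}}\{x_\beta:\beta<\alpha\}$ is a separable closed subspace. As $X$ is non-separable, $Y_\alpha$ is a \emph{proper} closed subspace, and Riesz's lemma provides a unit vector $x_\alpha$ with $\mathrm{dist}(x_\alpha,Y_\alpha)\ge\theta$. Carrying this out for every $\alpha<\omega_1$ yields an uncountable family $\{x_\alpha\}_{\alpha\in I}$ of unit vectors, with $I=\omega_1$.

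To verify the estimate, given indices $\alpha_1<\cdots<\alpha_n$ and nonzero scalars $c_1,\dots,c_n$, I would isolate the term of largest index. Since $x_{\alpha_i}\in Y_{\alpha_n}$ for every $i<n$, the vector $-\tfrac{1}{c_n}\sum_{i<n}c_ix_{\alpha_i}$ lies in $Y_{\alpha_n}$, whence
\[
\Big\|\sum_{i=1}^{n}c_ix_{\alpha_i}\Big\|=|c_n|\,\Big\|x_{\alpha_n}+\tfrac{1}{c_n}\sum_{i<n}c_ix_{\alpha_i}\Big\|\ge|c_n|\,\mathrm{dist}(x_{\alpha_n},Y_{\alpha_n})\ge\theta\,|c_n|.
\]
Because $|c_n|\ge\min_i|c_i|>0$ and $\theta>M$, this gives $\big\|\sum_i c_ix_{\alpha_i}\big\|\ge\theta|c_n|\ge\theta\min_i|c_i|>M\min_i|c_i|$, which is exactly the claim; the case $n=1$ follows at once from $\|x_{\alpha_1}\|=1>M$.

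The only genuine input is that non-separability forces $Y_\alpha\ne X$ at every countable stage, so that Riesz's lemma keeps applying throughout the recursion; the rest is bookkeeping. I do not expect a real obstacle. The one point to get right is the role of the ordering: bounding the combination below by $\theta$ times the coefficient of the \emph{highest-index} vector already beats $M\min_i|c_i|$, since that coefficient has modulus at least the minimum. Using $\min_i|c_i|$ rather than $\max_i|c_i|$ on the right-hand side is precisely what makes this one-sided, last-coordinate Riesz estimate sufficient.
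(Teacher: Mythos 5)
Your proof is correct and follows essentially the same route as the paper's: both rest on Riesz's lemma combined with the estimate $\|\sum_{i}c_ix_{\alpha_i}\|\ge |c_k|\,\mathrm{dist}\bigl(x_{\alpha_k},\,\mathrm{span}\{x_{\alpha_i}: i\ne k\}\bigr)$ and the observation that a countable family spans a separable, hence proper, closed subspace. The only cosmetic difference is that you organize the construction as a transfinite recursion along $\omega_1$, whereas the paper invokes Zorn's lemma to produce a maximal such family.
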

\begin{proof}
This is an easy application of Zorn's lemma and Riesz's lemma: note that $\|\sum_{i=1}^{n}c_ix_{\alpha_i}\|\ge |c_k|{\rm dist}(x_{\alpha_k}, {\rm span}\{x_{\alpha_i}: k\ne i\in\{1,\dots,n\}\}$ for every $k=1,\dots, n$.
\end{proof}

\begin{lem}\label{lem-tam}
Let $G$ be a $\sigma$-compact locally compact group. The orbit ${\rm Orb}_G(f)$ of any function $f$ in $L^p(G)$ is separable (in the norm topology).
\end{lem}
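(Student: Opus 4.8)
The plan is to exploit the strong continuity of the translation map together with the $\sigma$-compact decomposition of $G$. Concretely, I would show that the map $\Phi\colon G\to L^p(G)$, $\Phi(s)=T_sf$, is continuous, and then write $G$ as an increasing union of compact sets $G=\bigcup_{n\ge1}K_n$. Since a continuous image of a compact set is compact, each $\Phi(K_n)$ is a compact subset of the metric space $L^p(G)$, hence separable; and $\mathrm{Orb}_G(f)=\bigcup_{n\ge1}\Phi(K_n)$ is then a countable union of separable subsets of a metric space, which is again separable. This bypasses the issue that $G$ itself, though $\sigma$-compact, need not be separable as a topological space.

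The key point to establish is the continuity of $\Phi$. First I would record that, because $\mu$ is a \emph{left} Haar measure, each $T_s$ is an isometry of the unweighted space $L^p(G)$: the substitution $t=su$ gives $\int_G|f(s^{-1}t)|^p\,d\mu(t)=\int_G|f(u)|^p\,d\mu(u)$. Consequently, for any $s,s_0\in G$ one has $\|T_sf-T_{s_0}f\|_p=\|T_{s_0^{-1}s}f-f\|_p$, so it suffices to prove continuity at the identity $e$, i.e.\ that $\|T_sf-f\|_p\to0$ as $s\to e$.

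For continuity at $e$ I would use the density of $C_c(G)$ in $L^p(G)$ (valid for $1\le p<+\infty$) together with the left uniform continuity of compactly supported continuous functions. Given $\e>0$, choose $g\in C_c(G)$ with $\|f-g\|_p<\e$; using the isometry property, $\|T_sf-f\|_p\le 2\|f-g\|_p+\|T_sg-g\|_p$. If $U$ is a relatively compact neighbourhood of $e$, then for $s\in U$ one has $\mathrm{supp}(T_sg)\cup\mathrm{supp}(g)\subset C'$ with $C'=\overline{U}\,\mathrm{supp}(g)\cup\mathrm{supp}(g)$ fixed and compact, while left uniform continuity of $g$ gives $\sup_t|g(s^{-1}t)-g(t)|\to0$ as $s\to e$; hence $\|T_sg-g\|_p^p\le \mu(C')\sup_t|g(s^{-1}t)-g(t)|^p\to0$. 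Thus $\limsup_{s\to e}\|T_sf-f\|_p\le 2\e$ for every $\e>0$, proving continuity at $e$, and therefore everywhere.

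With continuity of $\Phi$ in hand, the conclusion follows as in the first paragraph. The main (and essentially the only nontrivial) obstacle is the continuity of translation in $L^p(G)$ for a general locally compact group; once that classical fact is set up via $C_c(G)$-density and uniform continuity, the separability of the orbit is a soft topological consequence of $\sigma$-compactness, with no dependence on whether $G$ is second-countable.
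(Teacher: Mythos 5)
Your proof is correct and follows essentially the same route as the paper: continuity of the translation map $s\mapsto T_sf$ into $L^p(G)$, combined with the $\sigma$-compact decomposition of $G$ and the fact that a countable union of compact subsets of a metric space is separable. The only difference is that the paper simply cites \cite[Theorem 20.4]{HR} for the continuity, whereas you reprove that classical fact via $C_c(G)$-density and uniform continuity --- which is precisely the standard argument behind the cited theorem.
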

\begin{proof}
This fact is not new, and proved for example in \cite{tam} for $p=1$. Since it is not stated for $p>1$ up to our knowledge, we include a proof here. It is known \cite[Theorem 20.4]{HR} that the map $\tau:s\mapsto T_sf$ is continuous, from $G$ to $L^p(G)$ with the norm topology. Since $G$ is a countable union of compact sets, the same holds for $\tau(G) = {\rm Orb}_G(f)$. This is in addition a metric space, which implies that it is separable.
\end{proof}

\begin{thm}
If for some $p$, $S$, and $\omega$ there exists a $(\mathbb{C},S)$-dense vector in $L^p(G,\omega)$, then $G$ is second-countable which is equivalent to saying that $L^p(G,\omega)$ is separable. 
\end{thm}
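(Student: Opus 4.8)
The statement contains two assertions: the equivalence ``$G$ second-countable $\Leftrightarrow$ $L^p(G,\omega)$ separable'', and the implication from the existence of a $(\mathbb C,S)$-dense vector. For the equivalence I would first record that $g\mapsto\omega g$ is an isometric isomorphism of $L^p(G,\omega)$ onto the unweighted space $L^p(G)$ (since $\|\omega g\|_{L^p(G)}=\|g\|_{p,\omega}$ and $\omega>0$), so the two spaces are separable simultaneously; and $L^p(G)$ is separable precisely when $G$ is second-countable, which is classical (second-countability makes the Borel field countably generated and $\mu$ $\sigma$-finite, giving separability, while if $G$ is not second-countable it is either non-$\sigma$-compact or non-metrizable, and in each case one produces an uncountable uniformly separated family of indicator functions). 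It therefore suffices to prove: if a $(\mathbb C,S)$-dense vector $f$ exists, then $L^p(G,\omega)$ is separable.

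I would then reduce this to separability of the plain orbit. Since $\overline{\mathrm{Orb}_S(\mathbb C f)}=L^p(G,\omega)$ and the scalar-multiplication map $(\lambda,g)\mapsto\lambda g$ on $\mathbb C\times L^p(G,\omega)$ is continuous, separability of $\mathrm{Orb}_S(f)=\{T_sf:s\in S\}$ forces separability of $\mathbb C\cdot\mathrm{Orb}_S(f)$ and hence of its closure, the whole space. So the crux is to show the orbit is separable even though $S$, and the orbit, may be uncountable. To organise this I would fix an open $\sigma$-compact subgroup $H\subset G$ (generated by a compact symmetric neighbourhood of $e$) and split $G$ into its left cosets; as $f\in L^p(G,\omega)$ has $\sigma$-finite support, $f$ lives, up to a null set, on countably many cosets $\{c_\beta H:\beta\in B\}$ with $B$ countable.

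The first task is to prove that $G$ is $\sigma$-compact, i.e.\ that $G/H$ is countable. Arguing by contradiction, if $G/H$ were uncountable I would invoke Lemma~\ref{8} to extract an uncountable, uniformly separated, norm-one family $\{x_\alpha\}$ (realised concretely by normalised indicators of a fixed compact subset of each coset) and approximate each $x_\alpha$ by some $\lambda_\alpha T_{s_\alpha}f$. A good approximation forces most of the mass of $\lambda_\alpha T_{s_\alpha}f$ onto a single target coset, whose ``source'' is one of the countably many cosets indexed by $B$; a pigeonhole on $B$ then yields an uncountable subfamily all drawing their mass from one fixed coset, and letting the approximation error tend to $0$ would force $f$ to be concentrated on that single coset to arbitrary precision, contradicting the ability to approximate functions supported on other cosets. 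This contradiction gives $\sigma$-compactness of $G$.

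Once $G$ is $\sigma$-compact, the plan is to deduce separability of $\{T_sf:s\in S\}$, and hence of $L^p(G,\omega)$, from the separability of translation orbits in Lemma~\ref{lem-tam}. \emph{This is the main obstacle, and exactly where the unweighted argument of \cite{AK} must be modified}: Lemma~\ref{lem-tam} provides continuity of $s\mapsto T_sf$ only for the unweighted norm, while in $L^p(G,\omega)$ this map need not be norm-continuous, since the weight interacts with translation through the multiplier $\omega(t)/\omega(s^{-1}t)$, which is a priori only bounded by $M(s)$. To circumvent this I would transport the problem to $L^p(G)$ via $g\mapsto\omega g$, under which $T_s$ becomes the weighted translation $h\mapsto\bigl(\omega(\cdot)/\omega(s^{-1}\cdot)\bigr)T_sh$, exhaust $S$ by compact subsets of the $\sigma$-compact $G$, and on each compact piece combine the unweighted continuity of Lemma~\ref{lem-tam} with uniform control of the weight multiplier to show the corresponding portion of the orbit is totally bounded, hence separable; assembling these gives a countable dense subset of the whole orbit. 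Securing the uniform control of the multiplier on compacta is the delicate point; with it, separability of the orbit and of $L^p(G,\omega)$ follows, which by the first paragraph is equivalent to second-countability of $G$.
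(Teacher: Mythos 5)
There is a genuine gap at the point you yourself flag as ``the delicate point'': the uniform control of the weight multiplier on compact pieces of $S$ does not exist for weights that are merely $S$-admissible, so your plan to prove separability of $\{T_sf:\,s\in S\}$ by total boundedness on compacta cannot work. The paper constructs a counterexample to exactly this (the example in Section 3 with $G=\mathbb{R}$, $S=\mathbb{R}_+$, $a_n=n!$): there the weight is $\mathbb{R}_+$-admissible, yet there is $f\in L^p(\mathbb{R},\omega)$ with $\|f-T_sf\|_{p,\omega}\not\to 0$ as $s\to 0^+$; in fact for $p>1$ one has $\|f-T_{2^{-n}}f\|_{p,\omega}\to\infty$, so $\{T_sf:\,s\in S\cap[0,1]\}$ is not even bounded, let alone totally bounded. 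The function $s\mapsto T_sf$ is continuous only when $\omega$ is $G$-admissible (Theorem \ref{16}), which is not assumed here. The paper's proof avoids this entirely by never proving separability of the weighted orbit directly. Instead it argues by contradiction: if $L^p(G,\omega)$ were non-separable, Lemma \ref{8} plus approximation by $\lambda_\alpha T_{s_\alpha}f$ and a pigeonhole on $|\lambda_\alpha|$ produce an \emph{uncountable, uniformly separated} family $\{T_{s_\alpha}f\}_{\alpha\in J}$ in the weighted norm; two further pigeonholes (a uniform tail bound over $\{\omega>N\}$ and a uniform bound $\|T_{s_\alpha}\|\le C$) together with the truncation $f_\varepsilon=f\chi_{\{\omega\ge\varepsilon\}}\in L^p(G)$ show that $\{T_{s_\alpha}f_\varepsilon\}$ stays uniformly separated in the \emph{unweighted} norm. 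This contradicts Lemma \ref{lem-tam}, which is applied only in $L^p(G)$, where continuity of translation is automatic. The truncation-plus-pigeonhole bridge is the essential idea your proposal is missing, and it cannot be replaced by continuity on $S$.

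Your $\sigma$-compactness step also has a gap, though a fixable one. After pigeonholing to a single source coset $c_{\beta_0}H$, the claim that ``letting the approximation error tend to $0$ forces $f$ to be concentrated on that coset'' does not follow in the weighted norm: from $\|\lambda_n T_{s_n}f-u_{\alpha_n}\|_{p,\omega}<\varepsilon_n$ you only obtain $\int_{G\setminus c_{\beta_0}H}|f(t)|^p\,\omega(s_nt)^p\,d\mu(t)\to 0$ with \emph{varying} $s_n$ and no lower bound on $\omega(s_n t)$, so $f$ need not vanish off $c_{\beta_0}H$ (translation is not an isometry in the weighted norm, which is precisely what your argument implicitly uses). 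The paper sidesteps this by choosing the open $\sigma$-compact subgroup $H$ to contain the support of $f$ in the first place (possible by \cite[Theorem 11.40]{HR}): then every $T_sf$ is supported in the single coset $sH$, so no $\gamma T_sf$ can approximate $\chi_U+\chi_{gU}$ with $U\subset H$ and $g\notin H$ within $\min(\|\chi_U\|_{p,\omega},\|\chi_{gU}\|_{p,\omega})$, and non-$\sigma$-compactness is contradicted in one line. Your opening reduction (orbit separable $\Rightarrow$ $\mathbb{C}\cdot$orbit separable $\Rightarrow$ space separable) and the use of \cite{V} for ``second-countable $\Leftrightarrow$ $L^p$ separable'' are fine; the failures are in the two steps above.
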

\begin{proof}
Let us prove first that $G$ is $\sigma$-compact, which is a strictly weaker condition. Suppose that $f$ is a $(\mathbb{C},S)$-dense vector in $L^p(G,\omega)$ for some $p,\omega$ and $S$. It is known that $f$ can be assumed to vanish outside a $\sigma$-compact set \cite[Theorem 11.40]{HR}, so that the support of $f$ is contained in an open $\sigma$-compact subgroup $H$ of $G$.

Suppose now that $G$ is not $\sigma$-compact, then $H\ne G$. Pick $g\in G\setminus H$. Let $U\subset H$ be a compact symmetric neighbourhood of identity. There exist $\gamma\in\C$ and $s\in S$ such that $\|\cchi_U+\cchi_{gU}-\gamma T_sf\|_{p,\omega}< \min( \|\cchi_U\|_{p,\omega}, \|\cchi_{gU}\|_{p,\omega})$. But this is only possible if the support of $T_sf$ intersects both $H$ and $gH$, which is false by the choice of $H$.

This shows that from now on, we can assume that $G$ is $\sigma$-compact. According to \cite[Theorem 2]{V}, we have $G$ is second-countable if and only if $L^p(G)$ is separable, and the same is valid for $L^p(G,\omega)$ since this space is isometrically isomorphic to $L^p(G)$.

Suppose that $L^p(G,\omega)$ is not separable but $f\in L^p(G,\omega)$ is a $(\mathbb{C},S)$-dense vector. According to Lemma \ref{8}, there exists an uncountable set $\{ g_\alpha\}_{\alpha\in I}\subset L^p(G,\omega)$ such that $\| g_\alpha\|_{p,\omega}=1$ for all $\alpha$ and 
$$
\|\sum_{i=1}^{n}c_i g_{\alpha_i}\|_{p,\omega}> \dfrac{9}{10}\min\{|c_i|:\, 1\le i\le n\}
$$
for all $\alpha_1,...,\alpha_n\in I$ and $c_1,...,c_n\in\mathbb{C}\setminus\{0\}$. For all $\alpha\in I$, we can approximate $g_\alpha$ so that $\| g_\alpha-\lambda_\alpha T_{s_\alpha}f\|_{p,\omega}<\dfrac{1}{10}$, with $s_\alpha\in S$ and $\lambda_\alpha\in\mathbb{C}\setminus\{0\}$. We can choose $\delta>0$ so that the set
$J=\{\alpha\in I: \delta<|\lambda_\alpha|<2\delta\}$ is uncountable. Now for every $\alpha,\beta\in J$ we have
\[\| \lambda_{\alpha}^{-1} g_\alpha- T_{s_\alpha}f\|_{p,\omega}<\dfrac{1}{10|\lambda_\alpha|}<\dfrac{1}{10\delta},\]
and
$$
\|T_{s_\alpha}f-T_{s_\beta}f\|_{p,\omega}
 \ge \| \frac1{\lambda_{\alpha}} g_\alpha-\frac1{\lambda_\beta} g_\beta\|_{p,\omega} -\frac1{5\delta}
 \ge\dfrac{9}{10} \min\{ \frac1{|\lambda_\alpha|}, \frac1{|\lambda_\beta|}\} -\frac1{5\delta}
  > \frac1{4\delta}.
$$
We will show next that the set $J$ allows to find a function in $L^p(G)$ with a non-separable orbit, which contradicts Lemma \ref{lem-tam} and will prove the theorem. For every $\alpha\in I$, we have
$$
\|T_{s_\alpha}f\|_{p,\omega}^p = \lim_{N\to+\infty} \int_{\{x:\omega(x)\le N\}} |T_{s_\alpha}f(x)|^p\omega(x)^pd\mu(x)
$$
so that there exists $N_\alpha\in\N$ such that $\int_{\{x:\omega(x)> N_\alpha\}} |T_{s_\alpha}f(x)|^p\omega(x)^pdx < (16\delta)^{-1}$. There is $N\in\N$ and an uncountable set $J_0\subset J$ such that $N_\alpha=N$ for every $\alpha\in J_0$. Moreover, there exists $C>0$ and an uncountable set $J_1\subset J_0$ such that $\|T_{s_\alpha}\|\le C$ for every $\alpha\in J_1$.

Next, for every $\e>0$ the function
$$
f_\e(x) = f(x) \cchi_{\{\omega\ge \e\}} (x)
$$
is in $L^p(G)$. Pick $\e>0$ such that $\|f_\e-f\|_{p,\omega}<(32C\delta)^{-1}$.
We have now for $\alpha,\beta\in J_1$:
\begin{align*}
\|T_{s_\alpha}f_\e - T_{s_\beta}f_\e\|_p &\ge \frac1N \|\cchi_{\omega\le N}\cdot\big[T_{s_\alpha}f_\e - T_{s_\beta}f_\e\big]\|_{p,\omega}
\\& \ge \frac1N\Big[ \|T_{s_\alpha}f - T_{s_\beta}f\|_{p,\omega} - \|\cchi_{\omega> N}\cdot\big[T_{s_\alpha}f - T_{s_\beta}f\big]\|_{p,\omega}
\\& - \|\cchi_{\omega\le N}\cdot\big[T_{s_\alpha}(f_\e-f) - T_{s_\beta}(f_\e-f)\big]\|_{p,\omega} \Big]
\\& \ge \frac1N\Big[ \frac1{4\delta} - \frac2{16\delta} - \|T_{s_\alpha}(f_\e-f)\|_{p,\omega} - \|T_{s_\beta}(f_\e-f)\|_{p,\omega} \Big]
\\& \ge \frac1N\Big[ \frac1{8\delta} - 2C\|f_\e-f\|_{p,\omega} \Big] > \frac1{16\delta N}>0.
\end{align*}
This means that the $S$-orbit of $f_\e$ is nonseparable in $L^p(G)$. This is the expected contradiction, which proves the theorem.
\end{proof}


\begin{prop}
If $G$ is a nontrivial compact group, then the space $L^p(G,\omega)$ has no $(\Gamma,S)$-dense vectors for any $\Gamma,\omega,S,p$.
\end{prop}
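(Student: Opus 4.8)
The plan is to reduce everything to ruling out a $(\C,S)$-dense vector, since $(\Gamma,S)$-density implies $(\C,S)$-density. The decisive feature of a compact $G$ is that its Haar measure is finite, so that $\int_G\omega^p\,d\mu<+\infty$ (local $p$-integrability becomes global) and the finite measure $d\nu:=\omega^p\,d\mu$ identifies $L^p(G,\omega)$ isometrically with $L^p(\nu)$, on which $T_s$ still acts by $T_sh=h(s^{-1}\cdot)$. In particular the constants, and indeed all of $C(G)$, lie in $L^p(G,\omega)$. If $G$ is finite (and nontrivial) the space is $\cong\C^{|G|}$ with $|G|\ge 2$, while $\{T_sf:s\in S\}$ takes only finitely many values; hence $\{\lambda T_sf:\lambda\in\C,\ s\in S\}$ is a finite union of complex lines and cannot be dense. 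So I may assume $G$ is infinite, whence $L^p(G,\omega)$ is infinite-dimensional (and separable, by the theorem just proved).

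Next I would show that $(\C,S)$-density forces the \emph{normalized} orbit $N(f):=\{T_sf/\|T_sf\|_{p,\omega}:s\in S,\ T_sf\neq 0\}$ to be dense, modulo the choice of line, in the unit sphere of $L^p(G,\omega)$: if $\|\lambda T_sf-g\|_{p,\omega}<\e$ for a unit vector $g$, then the distance from $g$ to the line $\C\,T_sf$ is $<\e$, so every line through the origin is approximated by some $\C\,T_sf$. Consequently it suffices to prove that $N(f)$ is \emph{precompact}: in an infinite-dimensional space the projective sphere is not compact, so a precompact family of lines cannot be dense there, which is the contradiction.

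To obtain precompactness I would exploit that, \emph{unweighted}, orbits over the whole compact group are compact. For $\e>0$ set $f_\e:=f\cchi_{\{\omega\ge\e\}}$; then $f_\e\in L^p(G)$ (since $\int|f_\e|^p\,d\mu\le\e^{-p}\|f\|_{p,\omega}^p$) and $\|f-f_\e\|_{p,\omega}\to 0$. By the continuity of $s\mapsto T_sf_\e$ from $G$ to $L^p(G)$ (\cite[Theorem 20.4]{HR}), the ingredient used in Lemma \ref{lem-tam}, together with compactness of $G$, the orbit $\mathrm{Orb}_G(f_\e)=\{T_sf_\e:s\in G\}$ is compact, hence totally bounded, in the unweighted $L^p(G)$. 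On the band $R_{\e,N}:=\{\,\e\le\omega\le N\,\}$ the norms $\|\cdot\|_{p,\omega}$ and $\|\cdot\|_{p,R_{\e,N}}$ are equivalent with constants $\e,N$, and the $\nu$-mass of the complementary band tends to $0$ as $\e\downarrow 0$, $N\uparrow\infty$ by dominated convergence. Combining total boundedness on $R_{\e,N}$ with these two small errors, I would conclude that $N(f)$ is totally bounded, i.e.\ precompact.

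The main obstacle is the clash between the \emph{fixed} truncation band $R_{\e,N}$ and the \emph{moving} support produced by translation: replacing $f$ by $f_\e$ and restricting to $\{\omega\le N\}$ interacts with $T_s$ through the factor $\omega(s\,\cdot)/\omega(\cdot)$, so the passage from $\|T_sf-T_{s'}f\|_{p,\omega}$ to an unweighted estimate on $\mathrm{Orb}_G(f_\e)$ must be carried out uniformly in $s$. I expect to handle this exactly as in the proof of the preceding theorem, via the elementary bound $\|u\|_{p}\ge N^{-1}\|\cchi_{\{\omega\le N\}}u\|_{p,\omega}$ together with the admissibility estimate $\|T_s(f-f_\e)\|_{p,\omega}\le M(s)\|f-f_\e\|_{p,\omega}$, controlling the tail uniformly along the relevant sequence. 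Once this transfer is in place, the finiteness of $\mu(G)$ — encoded in the total boundedness of the compact orbit $\mathrm{Orb}_G(f_\e)$ — is precisely what a non-compact group lacks, and is what forbids a $(\Gamma,S)$-dense vector here.
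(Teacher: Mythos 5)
Your reduction (finite $G$ handled by counting lines, then passing to infinite $G$ and to $(\C,S)$-density), and the abstract step ``a precompact family of lines cannot be dense in the projective space of an infinite-dimensional Banach space'' are both sound. The proof breaks at the precompactness claim itself, and the gap is exactly where you flag it, but it is not repairable the way you suggest. To transfer compactness of the unweighted orbit $\{T_sf_\e : s\in G\}\subset L^p(G)$ to the weighted normalized orbit you need two estimates \emph{uniformly in} $s\in S$: the bound $\|T_s(f-f_\e)\|_{p,\omega}\le M(s)\|f-f_\e\|_{p,\omega}$, and smallness of the tail $\int_{\{\omega>N\}}|T_sf|^p\omega^p\,d\mu$ (this tail is what controls the middle term $\|T_sf_\e-T_{s'}f_\e\|_{p,\omega}$ on the set $\{\omega>N\}$, where the unweighted norm says nothing). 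But the hypothesis is only $S$-admissibility: each $M(s)$ is finite, with no bound on $\sup_{s\in S}M(s)$, and for fixed $s$ the tail tends to $0$ as $N\to\infty$ but not uniformly in $s$. Note that on a compact group a $G$-admissible weight is automatically bounded above and below (by \cite[Theorem 2.7]{feicht} and local boundedness of the submultiplicative function $M$), in which case $L^p(G,\omega)$ is just $L^p(G)$ with an equivalent norm and your argument closes immediately. So the uniform estimates you invoke hold precisely in the case where the proposition is easy; the entire content of the statement is the merely-$S$-admissible case, where $\omega$ may be unbounded and $M$ unbounded on $S$, and there your transfer has no support. (There is also a secondary uniformity issue: $\|T_sf\|_{p,\omega}$ need not be bounded away from $0$ on $S$, which the normalization step quietly requires.)

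You say you would handle this ``exactly as in the proof of the preceding theorem,'' but that proof uses a pigeonhole argument on an \emph{uncountable} family: among uncountably many $\alpha$ one extracts an uncountable subfamily with a common tail bound $N_\alpha=N$ and a common operator bound $\|T_{s_\alpha}\|\le C$. That device yields uniform constants only on a subfamily, and it suffices there because non-separability survives passage to an uncountable subfamily. Here total boundedness requires the estimates along \emph{arbitrary} sequences in $S$ (and $S$ may be countable with $M(s_n)\to\infty$), or else a uniformly controlled subfamily whose lines are still dense in every direction; neither pigeonhole nor a Baire-type argument produces that, since $\overline{\bigcup_C A_C}=L^p(G,\omega)$ with $A_C=\{\lambda T_sf:\ M(s)\le C,\ \lambda\in\C\}$ does not force any single $\overline{A_C}$ to be somewhere dense. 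Thus the key lemma of your plan (precompactness of the normalized orbit for a general $S$-admissible weight on a compact group) is left unproved, and I do not see how to obtain it along these lines. The paper's proof avoids orbit compactness entirely: it approximates the two test functions $\cchi_G$ and $\cchi_{G\setminus E}$, where $E$ is built from small translated neighbourhoods so that $\mu(E)\approx 1/2$, by $\gamma_1T_{s_1}f$ and $\gamma_2T_{s_2}f$, and extracts incompatible bounds on $z=\gamma_1/\gamma_2$ (namely $|z|>2$ and $|z|<(1+4\e)/(1-4\e)$). That argument is elementary, needs no continuity of $s\mapsto T_sf$, and---crucially---no uniformity in $s$ whatsoever; some such mechanism is what your approach is missing.
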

\begin{proof}
Suppose that $L^p(G,\omega)$ has a $(\Gamma,S)$-dense vector $f$. If $G$ is finite, then $\mathrm{Orb}_{S}(\Gamma f)$ is contained in the finite union $\cup\{ \C T_gf: g\in G\}$ of one-dimensional lines, which cannot be dense in $L^p(G,\omega)$ unless $G=\{e\}$. We suppose therefore that $G$ is infinite.

Scale the Haar measure so that $\mu(G)=1$. Fix $\e\in(0,1/12)$. There is $\delta>0$ such that $\mu(E_\delta)>1-\e$ where $E_\delta = \{t\in G: \omega(t)>\delta\}$. There exist $s_1\in S$ and $\gamma_1\in\Gamma$ such that $\|\cchi_G-\gamma_1 T_{s_1}f\|_{p,\omega}<\e \delta$. Then
$$
\delta^p\e^p > \int_{E_\delta} \delta^p |1-\gamma_1 f(s_1^{-1}t)|^p d\mu(t),
$$
so that
$$
\int_{s_1^{-1} E_\delta} |1-\gamma_1f(t)|^p d\mu(t) < \e^p.
$$

Since $G$ is infinite, $\mu$ has no atoms. We know that $G$ is second-countable, thus $e$ has a countable base of neighbourhoods $(U_n)$ and one can assume them decreasing; there exists $U_n$ with $\mu(U_n)<\e$. By compactness, $G$ is covered by a finite number of translates of $U_n$ (all of the same measure $<\e$); picking a sufficient number of them, one can form a set $E\subset G$ such that $1/2-\e<\mu(E)<1/2+\e$.
Let now $s_2\in S$, $\gamma_2\in\Gamma$ be such that $\|\cchi_{G\setminus E}-\gamma_2 T_{s_2}f\|_{p,\omega}<\delta\e$. As above, this implies 
$$
\delta^p\e^p > \int_{E_\delta\cap E} \delta^p |\gamma_2|^p |f(s_2^{-1}t)|^p d\mu(t),
$$
so that
$$
\int_{s_2^{-1} (E_\delta\cap E)} |\gamma_2 f(t)|^p d\mu(t) < \e^p,
$$
and
$$
\int_{ s_2^{-1} (E_\delta\setminus E)} |1-\gamma_2 f(t)|^p d\mu(t) < \e^p.
$$
Denote $A= s_1^{-1} E_\delta$, $B= s_2^{-1} (E_\delta\cap E)$, $C= s_2^{-1} (E_\delta\setminus E)$.
Now $\mu( A \cap B) \ge 1-\mu( G\setminus A) - \mu( G\setminus B) > 1-\e - 1/2-2\e = 1/2-3\e$, and similarly $\mu( A \cap C) \ge 1/2-3\e$. At the same time (the norms below are unweighted),
\begin{align*}
\mu(A\cap B)^{1/p} &= \|\cchi_{A\cap B}\|_p
 \le  \|\cchi_{A\cap B} \gamma_1 f\|_p + \|\cchi_{A\cap B} (1-\gamma_1f)\|_p < \frac{ |\gamma_1|}{|\gamma_2|} \e + \e,
\end{align*}
so if we denote $z=\gamma_1/\gamma_2$ then $\e|z| > (1/2-3\e)^{1/p}-\e \ge 1/2-4\e > 1/6$ by the choice of $\e$ and $|z|>1/(6\e)$. In particular, $|z|>2$. On the other hand,
\begin{align*}
\mu(A\cap C)^{1/p}\Big| 1-\frac{\gamma_1}{\gamma_2}\Big| &= \|\cchi_{A\cap C}\Big(1-\frac{\gamma_1}{\gamma_2}\Big)\|_p
 \\&\le  \|\cchi_{A\cap C} (1-\gamma_1 f)\|_p + \|\cchi_{A\cap C} \frac{\gamma_1}{\gamma_2}(1-\gamma_2f)\|_p
 < \e+ \frac{ |\gamma_1|}{|\gamma_2|} \e,
\end{align*}
whence similarly $|1-z|/4<\e(1+|z|)$ and $|z| < 1+4\e(1+|z|)$ which implies $|z|<(1+4\e)/(1-4\e)$. This is however incompatible with $|z|>2$.
This contradiction proves the proposition.
\end{proof}

\section{Continuity of translations}

The condition for the translation operator $T_s$ to be continuous was given in the introduction. The question of continuity of the map $S\to L^p(G,\omega)$, $s\mapsto T_sf$ for fixed $f$ is more delicate.

Let $C_{c}(G)$ denote the space of continuous functions on $G$ with compact support. On this space, no problem occurs. In a standard way (similarly to \cite[Theorem (20.4)]{HR}, using uniform continuity of $f$ and local $p$-summability of $\omega$), one proves the following
\begin{prop}
Let $G$ be a locally compact group, $1\le p<+\infty$, and let $\omega$ be a $G$-admissible weight on $G$. Then for every $f\in C_c(G)$ the map
 \[ \begin{array}{lrcl}
   & G& \longrightarrow & L^p(G,\omega)\\
        & s & \longmapsto & T_sf \end{array}\]
is continuous.
\label{4}
\end{prop}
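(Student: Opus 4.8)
The plan is to fix $s_0\in G$ and prove that $\|T_sf-T_{s_0}f\|_{p,\omega}\to0$ as $s\to s_0$; since $s_0$ is arbitrary, this gives continuity of $s\mapsto T_sf$ everywhere. First I would check that the map is well defined: because $\omega$ is locally $p$-integrable and $\mathrm{supp}\,f$ is compact, $f\in C_c(G)\subset L^p(G,\omega)$, and $G$-admissibility ensures each $T_sf$ again lies in $L^p(G,\omega)$.

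The core estimate rests on uniform continuity of $f$. Writing $L=\mathrm{supp}\,f$ and setting $u=s_0^{-1}t$, the pointwise difference takes the form $f(s^{-1}t)-f(s_0^{-1}t)=f(s^{-1}s_0u)-f(u)$, so that $\sup_{t\in G}|f(s^{-1}t)-f(s_0^{-1}t)|=\sup_{u\in G}|f(yu)-f(u)|$ with $y=s^{-1}s_0$. Since $f\in C_c(G)$ is (left) uniformly continuous, for each $\e>0$ there is a neighbourhood $U$ of $e$ with $\sup_{u\in G}|f(yu)-f(u)|<\e$ whenever $y\in U$. As $s\mapsto s^{-1}s_0$ is continuous and maps $s_0$ to $e$, there is a neighbourhood $V_0$ of $s_0$ with $s^{-1}s_0\in U$ for $s\in V_0$, whence $\sup_{t\in G}|f(s^{-1}t)-f(s_0^{-1}t)|<\e$ for all such $s$.

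Next I would localise the support uniformly in $s$. Fix a compact neighbourhood $V$ of $s_0$ with $V\subset V_0$. The integrand $|f(s^{-1}t)-f(s_0^{-1}t)|^p$ vanishes unless $t\in sL\cup s_0L$, and for $s\in V$ this set is contained in the single fixed compact set $W:=VL$. Local $p$-integrability of $\omega$ then gives $C:=\int_W\omega^p\,d\mu<+\infty$, and combining with the uniform pointwise bound,
$$
\|T_sf-T_{s_0}f\|_{p,\omega}^p=\int_{W}|f(s^{-1}t)-f(s_0^{-1}t)|^p\,\omega(t)^p\,d\mu(t)\le\e^p\,C
$$
for every $s\in V$. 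Letting $\e\to0$ yields the claim.

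The only genuine obstacle, absent in the unweighted case, is the weight $\omega$: one cannot integrate against Haar measure alone, so the argument must confine the support of the difference to a compact set $W$ independent of $s$ \emph{before} invoking local $p$-integrability, which is exactly what the fixed neighbourhood $V$ achieves. The remaining point to get right is the bookkeeping of uniform continuity, namely controlling $f(yu)-f(u)$ (rather than $f(uy)-f(u)$) for $y$ near $e$; both are routine for $f\in C_c(G)$.
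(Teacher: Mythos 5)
Your proof is correct, but it follows a genuinely different route from the paper's. The paper's argument proceeds in two steps: first it establishes the claim at the identity, namely that for every $\e>0$ there is a relatively compact neighbourhood $V$ of $e$ with $\|T_sf-f\|_{p,\omega}<\e$ for all $s\in V$ (the difference $T_sf-f$ being supported in a fixed compact set $\overline{W}K$ with $K=\mathrm{supp}\,f$, so that uniform continuity of $f$ and local $p$-summability of $\omega$ suffice); it then passes to an arbitrary point $s_0$ by factoring $T_sf-T_{s_0}f=T_{s_0}\bigl(T_{s_0^{-1}s}f-f\bigr)$ and using $\|T_{s_0}\|=M(s_0)<+\infty$ --- this is precisely where the $G$-admissibility hypothesis enters. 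You instead estimate directly at $s_0$: uniform continuity gives $\sup_{t\in G}|f(s^{-1}t)-f(s_0^{-1}t)|<\e$ for $s$ in a neighbourhood of $s_0$, and confining the support of the difference to the fixed compact set $VL$ lets you finish with local $p$-integrability alone. As a result your argument never invokes $G$-admissibility (even your well-definedness remark does not need it, since $T_sf\in C_c(G)\subset L^p(G,\omega)$ automatically), so you in fact prove the proposition for an arbitrary locally $p$-integrable weight, which is a slightly stronger statement. What the paper's factorisation through the identity buys is the template for Theorem \ref{16}, where $f$ is a general element of $L^p(G,\omega)$, one approximates from $C_c(G)$, and boundedness of translations genuinely cannot be dispensed with; for $f\in C_c(G)$ itself, your direct localisation shows the admissibility assumption is superfluous.
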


If $\omega$ is a $G$-admissible weight (all translations are bounded), the map in question is still continuous for any $f\in L^p(G,\omega)$:
\begin{thm}
Let $G$ be a locally compact group, $1\le p<+\infty$, and let $\omega$ be a $G$-admissible weight on $G$. Then for every $f\in L^p(G,\omega)$ the map
 \[
  \begin{array}{lrcl}
   & G& \longrightarrow & L^p(G,\omega)\\
        & s & \longmapsto & T_sf \end{array}       
\]
is continuous.
\label{16}
\end{thm}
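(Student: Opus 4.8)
The plan is to reduce the statement to continuity at the identity, dispose of that by a density argument resting on Proposition \ref{4}, and isolate the one genuinely non-trivial point: that $s\mapsto M(s)=\|T_s\|$ is bounded on a neighbourhood of $e$. First I would reduce to the identity. For $s_0\in G$ and $s\to s_0$ one writes $T_sf-T_{s_0}f=T_{s_0}\big(T_{s_0^{-1}s}f-f\big)$, so that $\|T_sf-T_{s_0}f\|_{p,\omega}\le M(s_0)\,\|T_{s_0^{-1}s}f-f\|_{p,\omega}$ with $s_0^{-1}s\to e$; since $M(s_0)<\infty$ by $G$-admissibility, it suffices to prove $\|T_uf-f\|_{p,\omega}\to0$ as $u\to e$. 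Fixing $\e>0$ and choosing $g\in C_c(G)$ with $\|f-g\|_{p,\omega}<\e$ (recall that $C_c(G)$ is dense in $L^p(G,\omega)$), one has
\[
\|T_uf-f\|_{p,\omega}\le M(u)\,\|f-g\|_{p,\omega}+\|T_ug-g\|_{p,\omega}+\|g-f\|_{p,\omega},
\]
where the middle term tends to $0$ by Proposition \ref{4}. Thus everything comes down to controlling $M(u)$ for $u$ near $e$.

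The key step is therefore to show that $M$ is locally bounded, and the crucial observation is that $M$ is lower semicontinuous. Indeed, since each $T_s$ is bounded and $C_c(G)$ is dense,
\[
M(s)=\sup\big\{\|T_sg\|_{p,\omega}:\,g\in C_c(G),\ \|g\|_{p,\omega}\le1\big\},
\]
and by Proposition \ref{4} each function $s\mapsto\|T_sg\|_{p,\omega}$ is continuous; a supremum of continuous functions is lower semicontinuous, so $A_n:=\{s\in G:M(s)\le n\}$ is closed for every $n$.

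Now I invoke Baire category. Since $\omega$ is $G$-admissible, $G=\bigcup_{n\ge1}A_n$, a countable union of closed sets; being a locally compact Hausdorff space, $G$ is a Baire space, so some $A_{n_0}$ has nonempty interior, i.e. $M\le n_0$ on a nonempty open set $U$. To transport this to the identity I use submultiplicativity: a short computation using the left-invariance of $\mu$ (so that $v=tu$ ranges over a.e. point as $u$ does) gives $M(st)\le M(s)M(t)$ for all $s,t$. Picking $s_1\in U$, the set $s_1^{-1}U$ is an open neighbourhood of $e$, and for $u=s_1^{-1}v$ with $v\in U$ one gets $M(u)\le M(s_1^{-1})M(v)\le n_0\,M(s_1^{-1})<\infty$, the finiteness again coming from $G$-admissibility. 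Hence $M$ is bounded near $e$, which is exactly what the first paragraph needed.

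Feeding this bound back into the displayed estimate yields $\limsup_{u\to e}\|T_uf-f\|_{p,\omega}\le\big(\sup_{u\in s_1^{-1}U}M(u)+1\big)\e$, and letting $\e\to0$ completes the proof. The main obstacle is precisely the local boundedness of $M$; once its lower semicontinuity is recognized, Baire's theorem and submultiplicativity finish it routinely, and I expect no difficulty beyond verifying the density of $C_c(G)$ and the submultiplicativity inequality, both of which are standard.
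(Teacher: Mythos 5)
Your proof is correct, but it reaches the crucial ingredient --- local boundedness of $M$ near the identity --- by a genuinely different route than the paper. The paper first invokes Feichtinger's theorem to replace $\omega$ by an equivalent \emph{continuous} weight, and then simply cites Edwards for the fact that $M$ is locally bounded; after that, the skeleton (approximate $f$ by $\phi\in C_c(G)$, apply Proposition \ref{4} to $\phi$, close with the triangle inequality) is the same as yours. You instead keep $\omega$ as it is and prove local boundedness of $M$ from scratch: $M(s)=\sup\{\|T_sg\|_{p,\omega}: g\in C_c(G),\ \|g\|_{p,\omega}\le 1\}$ is a supremum of functions that are continuous by Proposition \ref{4}, hence lower semicontinuous, so the sets $A_n=\{M\le n\}$ are closed; $G$-admissibility gives $G=\bigcup_n A_n$, Baire's theorem (valid in any locally compact Hausdorff space) gives some $A_{n_0}$ with interior, and submultiplicativity --- which follows at once from $T_{st}=T_sT_t$ and $\|T_{st}\|\le\|T_s\|\,\|T_t\|$, no Haar-measure computation needed --- transports the bound to a neighbourhood of $e$. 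What your approach buys is self-containedness: it eliminates both external references (Feichtinger and Edwards). What it costs is that every step of your argument --- the approximation of $f$, the sup formula for $M$, hence the lower semicontinuity --- rests on the density of $C_c(G)$ in $L^p(G,\omega)$ for a merely locally $p$-integrable, possibly discontinuous weight. That density is true (truncate to bounded compactly supported functions, then use Lusin's theorem together with the absolute continuity of $\omega^p\,d\mu$ on compact sets), but it is precisely the point that the paper's reduction to a continuous weight makes trivial, since then $\phi\mapsto\phi\omega$ carries $C_c(G)$ onto itself and the unweighted density applies. So if you write this up, prove that density statement explicitly rather than calling it standard; once it is in place, the rest of your argument is complete and arguably more elementary than the paper's.
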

\begin{proof}
It is known \cite[Theorem 2.7]{feicht} that every $G$-admissible locally summable weight $\omega$ is equivalent to a continuous weight $\tilde\omega$, in the sense that there exists $C>0$ such that $1/C\le \omega(t)/\tilde\omega(t)\le C$ for all $t\in G$. For $\omega$ locally $p$-summable the same is true of course. We can thus assume that $\omega$ is continuous.

Set $M(s):=\underset{t\in G}{\sup}\,\dfrac{\omega(st)}{\omega(t)}$ for every $s\in G$. According to \cite[Proposition 1.16]{E}, $M(s)$ is locally bounded on $G$. Fix $f\in L^p(G,\omega)$. Let $s_{0}\in G$ and $\e>0$. Since $G$ is locally compact, there exists an open neighborhood $W$ of $e$ whose closure is compact. Let $\delta>0$ such that
\[
\big(\|T_{s_{0}}\|+1+\underset{s\in s_{0}\overline{W}}{\sup}M(s)\big)\delta<\e.
\]
Since $\omega$ is continuous, the space $C_c(G)$ is dense in $L^p(G,\omega)$, hence, there exists $\phi\in C_{c}(G)$ such that
\[\|f-\phi\|_{p,\omega}<\delta.\]
By Proposition \ref{4}, there exists a symmetric open neighborhood $U$ of $e$  whose closure is compact such that
\[\|T_{s_{0}}\phi-T_s\phi\|_{p,\omega}<\delta\, \text{ for every } s\in s_{0}U .\]
Let $V=U\cap W$, then $V$ is an open neighborhood of $e$. Fix $s\in s_{0}V$. 
Thus  $\|T_{s_{0}}\phi-T_s\phi\|_{p,\omega}<\delta$ and $\|T_s\phi-T_sf\|_{p,\omega}<\delta\underset{s\in s_0\overline{W}}{\sup}M(s)$.
Hence
\begin{align*}
\|T_{s_{0}}f-T_{s}f\|_{p,\omega}&\le\|T_{s_{0}}f-T_{s_{0}}\phi\|_{p,\omega}+\|T_{s_{0}}\phi-T_s\phi\|_{p,\omega}+\|T_s\phi-T_sf\|_{p,\omega}\\
&<(\|T_{s_{0}}\|+1+\underset{s\in s_{0}\overline{W}}{\sup}M(s))\delta<\e.
\end{align*}
\end{proof}

With a weight as above, it is not difficult to show the following ``non-compactness'' of translations, necessary in the sequel:

\begin{prop}
Let $G$ be a locally compact non-trivial group, $S\subset G$, $1\le p<+\infty$, and let $\Gamma\subset\mathbb{C}$ be such that $\Gamma\setminus\{0\}$ is non-empty. If $\omega$ is a \textbf{$G$-admissible} weight on $G$ and $f\in L^p(G,\omega)$ is a $(\Gamma,S)$-dense vector, then for every compact set $K\subset G$,  $f$ is also a $(\Gamma,S\setminus K)$-dense vector in $L^p(G,\omega)$.
\end{prop}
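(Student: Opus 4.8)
The plan is to argue by contradiction, isolating the part of the orbit coming from $S\cap K$ and showing that it is too \emph{small} (nowhere dense) to carry the density on its own. Write $X:=L^p(G,\omega)$, and split the full orbit as $\mathrm{Orb}_S(\Gamma f)=C\cup D$, where $C=\{\lambda T_sf:\lambda\in\Gamma,\ s\in S\cap K\}$ and $D=\{\lambda T_sf:\lambda\in\Gamma,\ s\in S\setminus K\}$. Since $f$ is $(\Gamma,S)$-dense we have $\overline{C\cup D}=\overline C\cup\overline D=X$, so if $D$ were not dense then $O:=X\setminus\overline D$ would be a nonempty open set with $O\subseteq\overline C$. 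The whole goal is then to prove that $\overline C$ has empty interior. Note also that $f$ being $(\mathbb C,S)$-dense forces $G$ to be non-compact (a nontrivial compact group admits no such vector by the preceding proposition), so $X$ is infinite-dimensional; in particular every compact subset of $X$ has empty interior, and $f\neq0$.

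To control $C$ I would first pass to a compact set. The set $S\cap K$ is relatively compact in the compact set $K$, so $\overline{S\cap K}$ is compact; by the continuity of $s\mapsto T_sf$ (Theorem \ref{16}, which is exactly where $G$-admissibility of $\omega$ is used), the set $\bar A:=\overline{\{T_sf:s\in S\cap K\}}$ is compact, being a closed subset of the continuous image of $\overline{S\cap K}$. Moreover $0\notin\bar A$: the function $s\mapsto\|T_sf\|_{p,\omega}$ is continuous and strictly positive on the compact set $\overline{S\cap K}$ (positive because $T_s$ is injective and $f\neq0$), hence bounded below by some $\eta>0$, so every element of $\bar A$ has norm at least $\eta$.

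Finally I would show that the cone $F:=\{cv:c\in\mathbb C,\ v\in\bar A\}\supseteq C$ is both closed and meager. Writing $F=\bigcup_{n\ge1}K_n$ with $K_n:=\{cv:|c|\le n,\ v\in\bar A\}$, each $K_n$ is compact as a continuous image of $\{|c|\le n\}\times\bar A$, hence closed with empty interior in the infinite-dimensional space $X$; thus $F$ is meager, and by the Baire category theorem it has empty interior. The condition $0\notin\bar A$ guarantees $F$ is closed: if $c_mv_m\to w$ with $v_m\in\bar A$, then $|c_m|\,\eta\le\|c_mv_m\|$ keeps $(c_m)$ bounded, so after passing to a subsequence $c_m\to c$ and $v_m\to v\in\bar A$, giving $w=cv\in F$. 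Consequently $\overline C\subseteq F$, so $\overline C$ has empty interior, contradicting $\emptyset\neq O\subseteq\overline C$. Hence $D$ is dense, i.e.\ $f$ is $(\Gamma,S\setminus K)$-dense.

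I expect the crux to be the last paragraph, specifically the role of $0\notin\bar A$. It is precisely this fact — that translating $f$ by the compact piece $K$ cannot shrink it to $0$ — that upgrades ``$C$ is $\sigma$-compact, hence meager'' into ``$\overline C$ has empty interior.'' Without closedness of the cone one only obtains that $C$ meets $O$ in a dense (but possibly meager) subset, which does not by itself contradict $O\subseteq\overline C$; the Baire argument applies to $\overline C$ only because $F$ is a genuine closed set containing it.
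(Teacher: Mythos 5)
Your proposal is correct and takes essentially the same route as the paper's proof: invoke Theorem~\ref{16} (this is where $G$-admissibility enters) to see that the translates of $f$ over the compact set $K$ form a compact set, observe that the cone over it is nowhere dense in the infinite-dimensional space $L^p(G,\omega)$, and conclude that removing this cone from the dense $(\Gamma,S)$-orbit leaves a dense set. The only difference is that you carefully justify the nowhere-density of the cone via $0\notin\overline{\{T_sf:\,s\in S\cap K\}}$ (so that the cone is closed), a point the paper asserts without comment; this is a genuine detail, since the cone over a compact set whose directions accumulate at $0$ can even be dense.
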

\begin{proof}
As pointed out above, we can assume that $\omega$ is continuous. Since $G$ is not a finite group, $L^p(G,\omega)$ is infinite-dimensional. By Theorem \ref{16}, the set $\{ T_sf:\, s\in K\}$ is compact in $L^p(G,\omega)$, thus the set $\mathbb{C}\{ T_sf:\, s\in K\}$ is nowhere dense. Hence
\[L^p(G,\omega)=\mathrm{int}\,\overline{\mathrm{Orb}_{S}(\Gamma f)\setminus \mathbb{C}\{ T_sf:\, s\in K\} }=\mathrm{int}\,\overline{\mathrm{Orb}_{(S\setminus K)}(\Gamma f)},\]
which implies that $f$ is a $(\Gamma,S\setminus K)$-dense vector in $L^p(G,\omega)$.
\end{proof}

However, if translations $T_s$ are bounded only for $s$ in a subset $S\subset G$, then the translation map $s\mapsto T_sf$ may be discontinuous on $S$. This explains the need of the long Lemma \ref{noncompact} below.

In the following example, we construct a continuous weight on $\R$, prove first that it is $\mathbb R_+$-admissible, and then exhibit a function $f\in L^p(\R,\omega)$ such that $T_s f\not\to f=T_0f$ as $s\to0$.

\begin{ex}
Set $G=\mathbb{R}$, $S=\mathbb{R}_{+}$ and $a_n=n!$ for $n\ge2$. Define the weight as
\[
   \omega(t):= \begin{cases}
    1 &\text{ if } t\le 0 \text{ or } t\in[a_{n-1}+1,a_n-n], n\ge 3\\
    2^{t-(a_n-n)} &\text{ if } t\in[a_n-n,a_n-1] \\
    2^{n-1} &\text{ if } t\in[a_n-1,a_n-1/2] \\
    1+(2^n-2)(a_n-t) &\text{ if } t\in[a_n-1/2,a_n]\\
    1+(2^n-1)2^n(t-a_n) &\text{ if } t\in[a_n,a_n+2^{-n}] \\
    \dfrac{1}{t-a_n} &\text{ if } t\in[a_n+2^{-n},a_n+1]
    \end{cases}
\] 
\end{ex}
This weight is thus equal to 1 at $a_n$ and ``far from'' any $a_n$; around $a_n$, it has two peaks: one on the left, with the maximum value $2^{n-1}$ taken on a whole segment $[a_n-1,a_n-1/2]$, and another on the right, with the maximum $2^n$ taken in $a_n+2^{-n}$. It is clear that $\omega$ is continuous.

{\bf Claim 1.} $\omega$ is an $\mathbb{R}_{+}$-admissible weight.
\begin{proof}
Set 
\[M(s)=\underset{t\in\mathbb{R}}{\sup}\dfrac{\omega(s+t)}{\omega(t)}.\]
Suppose that $0<s<1/2$, and let $n$ be such that $2^{-n-1}<s\le 2^{-n}$. Clearly
\[
M(s)\ge\dfrac{\omega(a_n+s)}{\omega(a_n)}=1+(2^{2n}-2^n)s \ge 2^{2n-1}s \ge \frac1{8s}.
\]
Let us show that in fact $M(s)\le 2/s$. To estimate $M(s)$, let us consider all possible cases for $t\in\R$.
\begin{itemize}
\item If $t\le a_{n+1}-(n+1)$, then $\omega(t)$ and $\omega(t+s)$ are  between $1$ and $2^n$, so that their ratio is bounded by $2^n$. 
\item If $t\in[a_m-m,a_m-1-s]$ for $m>n$, then
\[
\dfrac{\omega(t+s)}{\omega(t)}=2^s\le 2.
\] 
\item If $t\in[a_m-1-s,a_m-1/2]$ for $m>n$, then
\[
\dfrac{\omega(t+s)}{\omega(t)}\le\dfrac{2^{m-1}}{2^{m-1-s}}=2^s\le 2.
\]
\item If $t\in[a_m-1/2,a_m-s]$ for $m>n$, then $\omega(t+s)\le\omega(t)$ and 
\[
\dfrac{\omega(t+s)}{\omega(t)}\le1.
\]
\item If $t\in[a_m-s,a_m+2^{-m}-s]$ for $m>n$, denote $u=a_m-t$. We have $u\in[s-2^{-m},s]$, and 
\begin{align*}
\dfrac{\omega(t+s)}{\omega(t)}&=\dfrac{1+(2^m-1)2^m(t+s-a_m)}{1+(2^m-2)(a_m-t)}\\
&=\dfrac{1+(2^m-1)2^m(s-u)}{1+(2^m-2)u}=:P(u).
\end{align*}
$P$ is a decreasing function of $u$ on $[s-2^{-m},s]$, hence
\begin{align*}
\dfrac{\omega(t+s)}{\omega(t)}&\le P(s-2^{-m})=\dfrac{2^m}{2^m s - 2s +2^{-m+1}}\\
&\le\dfrac{2^m}{(2^m-2)s}\le \frac1{2s}.
\end{align*}
\item If $t\in[a_m-s+2^{-m},a_m]$ for $m>n$, denote again $u=a_m-t$. We have $u\in[0,s-2^{-m}]$, and 
$$
\dfrac{\omega(t+s)}{\omega(t)}=\dfrac{1}{[1+(2^m-2)(a_m-t)](t+s-a_m)}=\dfrac{1}{Q(u)},
$$
where $Q(u):=(1+(2^m-2)u)(s-u)$. It is easy to see that $Q$ is a concave function, and thus its minimum on any segment is attained at one of its ends. On $[0,s-2^{-m}]$, we have $Q(0)=s$ and
\[
Q(s-2^{-m})=s+2^{1-2m}-2^{1-m}s\ge (1-2^{1-m})s\ge \frac s2,
\]
as we suppose $m>n\ge1$. It follows that
\[
\dfrac{\omega(t+s)}{\omega(t)}=\dfrac{1}{Q(u)}\le \frac2s.
\]
\item If $t\in[a_m,a_m+2^{-m}]$ with $m>n$, denote $u=t-a_m\in[0,2^{-m}]$. We have
$$
\dfrac{\omega(t+s)}{\omega(t)}=\dfrac{1}{(t+s-a_m)(1+(2^m-1)2^m(t-a_m))}=R(u),
$$
where $R(u):=\dfrac{1}{(s+u)((2^{2m}-2^m)u+1)}$. Then $R$ is decreasing on $[0,2^{-m}]$, hence
\[
\dfrac{\omega(t+s)}{\omega(t)}\le R(0)=\frac1s.
\]
\item If $t\in[a_m+2^{-m},a_m+1]$, then $\omega(t+s)\le\omega(t)$.
\end{itemize}
Taking maximum of these estimates, we arrive at $M(s)\le\dfrac2s$.

Being finite on $[0,1/2)$ and submultiplicative (i.e., $M(s+s')\leqslant M(s)M(s')$), $M$ is finite on $[0,\infty)$ so that $\omega$ is an $\R_+$-admissible weight.
\end{proof}

One can verify that $M(s)<\infty$ exactly when $s\in\R_+$: if $s\le -1$, then
$$
M(s)\ge \sup_{n\ge 2}\dfrac{\omega(a_n+2^{-n})}{\omega(a_n+2^{-n}+|s|)}\ge \sup_{n\ge 2} 2^n=+\infty,
$$
and if $-1<s<0$ then
$$
M(s)\ge \sup_{n: 2^{-n}\le 1-|s|}\dfrac{\omega(a_n+2^{-n})}{\omega(a_n+2^{-n}+|s|)}= \sup_{n: 2^{-n}\le 1-|s|} 2^n (2^{-n}+|s|) =+\infty.
$$

{\bf Claim 2.} There exists $f\in L^p(\R,\omega)$ such that $\|f-T_sf\|_{p,\omega}\not\to0$, $s\to0$.
\begin{proof}
Set $f = \sum_{k=2}^\infty \cchi_{[a_k-2^{-k}, a_k]}$. Check first that its $p,\omega$-norm is finite:
\begin{align*}
\|f\|_{p,\omega}^p &= \sum_{k=2}^\infty \int_{a_k-2^{-k}}^{a_k} \big( 1+(2^k-2)(a_k-t)\big)^p dt
\\&\le \sum_{k=2}^\infty 2^{-k} (1+ (2^k-2) 2^{-k})^p <\infty.
\end{align*}
If $s=2^{-n}$, $2\le n\in\N$, then $T_sf=\sum_{k=2}^\infty \cchi_{[a_k-2^{-k}+2^{-n}, a_k+2^{-n}]}$; it equals 1 on $[a_n, a_n+2^{-n}]$ while $f$ vanishes (almost everywhere) on this segment. Thus
\begin{align*}
\|f-T_sf\|_{p,\omega}^p &\ge \int_{a_n}^{a_n+2^{-n}} \omega(t)^p dt
= \int_{a_n}^{a_n+2^{-n}} \big(1+(2^n-1)2^n(t-a_n)\big)^p dt
\\&\ge 2^{p(n-1)+pn} \int_0^{2^{-n}} t^p dt = \frac1{(p+1)2^p} 2^{2pn} 2^{-n(p+1)} = \frac1{(p+1)2^p} 2^{n(p-1)}
\end{align*}
which is constant if $p=1$ and tends to infinity as $n\to \infty$ if $p>1$.
\end{proof}

The following lemma will be used in the proof of Theorem \ref{theorem A}. 
\begin{lem}\label{noncompact}
Let $G$ be a locally compact non-compact group, $S\subset G$, $1\le p<+\infty$, and $\omega$ an $S$-admissible weight. Let $f$ be a $(\Gamma,S)$-dense vector in $L^p(G,\omega)$. Then for every compact set $K\subset G$ of positive measure, any $\e>0$, $\lambda\in\C\setminus\{0\}$ and every compact set $L\subset G$ there are $\gamma\in\Gamma$, $s\in S\setminus L$ such that $$\|\gamma T_sf-\lambda \cchi_{K}\|_{p,\omega}<\e.$$ 
\label{9}
\end{lem}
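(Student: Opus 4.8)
The plan is to argue by contradiction, reducing the statement to the claim that a family of translates indexed by a fixed compact set cannot be dense near the target $\lambda\cchi_K$. Suppose the conclusion fails: there are a compact $K$ with $\mu(K)>0$, some $\varepsilon>0$, $\lambda\in\C\setminus\{0\}$ and a compact set $L$ such that $\|\gamma T_sf-\lambda\cchi_K\|_{p,\omega}\ge\varepsilon$ for every $\gamma\in\Gamma$ and every $s\in S\setminus L$. Since $f$ is $(\Gamma,S)$-dense, every element of the ball $B:=B(\lambda\cchi_K,\varepsilon)$ is a norm limit of vectors $\gamma T_sf$, and by the assumption all the approximating pairs eventually have $s\in S\cap L$. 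Hence the restricted family $\mathcal O_L:=\{\gamma T_sf:\gamma\in\Gamma,\ s\in S\cap L\}$ is dense in the open ball $B$, and the whole point is to show that $\mathcal O_L$ is in fact nowhere dense, which is the desired contradiction.

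To control $\mathcal O_L$ I would pass to an unweighted $L^p$-space, as in the proof of the theorem in Section \ref{sec-group}. Put $L':=\overline{S\cap L}$, a compact subset of $G$. Choose $0<\delta<N$ and a compact $Q\supseteq K$, and set $A:=\{x\in G:\delta\le\omega(x)\le N\}\cap Q$; on $A$ the weighted and unweighted norms are comparable, $\delta\|h\|_{p,A}\le\|\cchi_Ah\|_{p,\omega}\le N\|h\|_{p,A}$. Because $\int_K\omega^p\,d\mu<\infty$ and $\omega>0$ a.e., one can fix $\delta,N,Q$ so that $\|\lambda\cchi_{K\setminus A}\|_{p,\omega}$ is as small as desired; restricting the approximations to $A$ then shows that $\{\gamma\,\cchi_AT_sf:\gamma\in\Gamma,\ s\in S\cap L\}$ is dense in a ball $B_A$ of the space $L^p(A)$ (unweighted), which is infinite-dimensional since $\mu$ is non-atomic and $\mu(A)>0$. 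Writing $\Phi_s:=\cchi_AT_sf\in L^p(A)$, I am reduced to showing that $\C\cdot\{\Phi_s:s\in S\cap L\}$ is nowhere dense in $L^p(A)$.

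The natural way to finish is to prove that the set of normalized translates $\{\Phi_s/\|\Phi_s\|_p:s\in S\cap L\}$ is relatively compact in $L^p(A)$: then $\overline{\C\,\{\Phi_s\}}$ is a cone over a compact subset of the unit sphere, hence a closed $\sigma$-compact set, which has empty interior in the infinite-dimensional space $L^p(A)$ and so is nowhere dense, contradicting density in $B_A$. In the unweighted space left translation is isometric and continuous (as used in Lemma \ref{lem-tam}), and $L'$ is compact, so for a function genuinely lying in $L^p(G)$ the corresponding family of translates would indeed be compact.

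Here lies the main obstacle, and the reason the lemma is delicate: the map $s\mapsto T_sf$ need not be continuous on $S$ in the weighted norm (the Example above exhibits exactly this failure), and correspondingly $\|\Phi_s\|_p$ need not stay bounded as $s$ runs over $S\cap L$, since $M(s)$ may blow up near accumulation points of $S\cap L$ lying outside $S$. Thus one cannot simply invoke Theorem \ref{16}. The hard part is therefore to recover relative compactness of $\{\Phi_s/\|\Phi_s\|_p\}$ by hand: one fixes the support in $Q$ and the weight in $[\delta,N]$, uses the pointwise relation $\omega(s^{-1}x)\ge\omega(x)/M(s)$ to keep the mass of $\Phi_s$ from escaping, and controls the scalars through the relation $|\gamma_k|\,\|T_{s_k}f\|_{p,\omega}\to\|\lambda\cchi_K\|_{p,\omega}$ coming from any convergent sequence of approximations. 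Because the target is the single bounded, compactly supported function $\lambda\cchi_K\in L^p(G)\cap L^p(G,\omega)$, these truncations are harmless, which is precisely why the conclusion is claimed only for targets of this special form and not for arbitrary vectors.
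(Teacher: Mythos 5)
Your contradiction setup and the reduction to the unweighted space $L^p(A)$ are reasonable as far as they go, but the proof has a genuine gap at exactly the point you yourself flag as ``the hard part'': you never prove the relative compactness of the normalized family $\{\Phi_s/\|\Phi_s\|_p:\ s\in S\cap L\}$ in $L^p(A)$, and without it the nowhere-density claim, hence the contradiction, never materializes. The closing sentences gesturing at how to do it ``by hand'' do not survive scrutiny. Writing $L'=\overline{S\cap L}$, one has $\Phi_s=\cchi_A T_s(f\cchi_{L'^{-1}A})$ for $s\in S\cap L$, so compactness would follow from norm-continuity of translation if $f\cchi_{L'^{-1}A}$ belonged to the \emph{unweighted} $L^p(G)$; but your set $A$ controls $\omega$ only on $A$ itself, not on $L'^{-1}A$, where $\omega$ may be arbitrarily close to $0$ and $f$ correspondingly huge. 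The pointwise bound $\omega(s^{-1}x)\ge\omega(x)/M(s)$ gives nothing uniform, because $S$-admissibility yields $M(s)<\infty$ for each individual $s\in S$ but no bound on $\sup\{M(s):s\in S\cap L\}$ --- the Example in the paper exhibits exactly this blow-up of $M$ near accumulation points of $S$. Finally, the scalar relation $|\gamma_k|\,\|T_{s_k}f\|_{p,\omega}\to\|\lambda\cchi_K\|_{p,\omega}$ constrains only those translates that approximate the single target $\lambda\cchi_K$, whereas nowhere-density of $\C\{\Phi_s\}$ in a ball requires control over \emph{all} $s\in S\cap L$ whose translates enter that ball; these approximate arbitrary elements of the ball and carry no such normalization. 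So the key claim is left unproved, and there is no evident route to it under the stated hypotheses.

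For comparison, the paper's proof avoids compactness altogether, which is why the lemma is long. It first proves a Claim: assuming the conclusion fails, there is $\xi>0$ such that every approximation $\|\lambda\cchi_K-\gamma T_sf\|_{p,\omega}<\e$ forces $s\in L$ and $|\gamma|>\xi$. It then takes $N$ slightly perturbed targets $\lambda_j\cchi_K$, $j=0,\dots,N-1$, with $N$ so large that $\mu(LK)<N/2$, and approximates each with accuracy $\delta_j$ chosen inductively so that the disks $\{z\in\C:\,|z-\lambda_j/\gamma_j|<4^{1/p}C\delta_j/\xi\}$ are pairwise disjoint; on sets $K_j'\subset s_j^{-1}K\subset LK$ of measure $>1/2$ the function $f$ is trapped near the distinct values $\lambda_j/\gamma_j$, so the $K_j'$ are pairwise disjoint, giving $\mu(LK)>N/2$, a contradiction. (The possible failure of the inductive choice of $\delta_j$ is shown to force $f\equiv0$ off $LK$, which contradicts $(\Gamma,S)$-density via non-compactness of $G$.) This measure-counting argument is precisely what replaces the unavailable compactness, and your proposal would need an argument of comparable substance at the step you left open. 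A minor additional point: your appeal to non-atomicity of $\mu$ fails for infinite discrete $G$, though in that case $S\cap L$ is finite and the lemma is immediate, since the orbit restricted to $S\cap L$ lies in a finite union of complex lines.
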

\begin{proof}
We can scale the Haar measure if necessary to have $\mu(K)=1$. There is clearly $C>0$ such that for $K_C:=\{ t\in K: \, \omega(t)>1/C\}$ we have $\mu(K_C)>\dfrac{3}{4}$. Increasing $L$ if necessary, we can suppose that $e\in L=L^{-1}$. We can also take $\e>0$ small enough to guarantee $4\e<\|\lambda \cchi_{K}\|_{p,\omega}$ and $2C\e<|\lambda|$.

Suppose the contrary, that is, $\|\lambda \cchi_{K}-\gamma T_sf\|_{p,\omega}\ge\e$ for every $\gamma\in \Gamma$, $s\in S\setminus L$. Let us prove the following

\textbf{Claim.} There exists $M>0$  such that for all $\gamma\in\Gamma$ and $s\in S$ satisfying $\| \lambda \cchi_{K}-\gamma T_sf\|_{p,\omega}<\e$, we have $s\in L$ and $|\gamma|>\xi$ with $\xi:=M^{-1}(|\lambda|-2^{1/p}C\e)>0$.
\begin{proof}[Proof of the claim]
There exists $M>0$ such that the set $X_M=\{ t\in LK:\, |f(t)|>M\}$ has measure less than $\dfrac{1}{10}\mu(K_C)$. Let $\gamma\in\Gamma$ and $s\in S$ be such that $\| \lambda \cchi_{K}-\gamma T_sf\|_{p,\omega}<\e$. By assumption we have $s\in L$ and
\begin{align*}
\e^p&>\|\lambda \cchi_{K}-\gamma T_sf\|_{p,\omega}^p\ge\int_{K_C}|\lambda-\gamma f(s^{-1}t)|^p\omega(t)^p\,d\mu(t)
\\
&\ge C^{-p}\int_{K_C}|\lambda-\gamma f(s^{-1}t)|^p\,d\mu(t)\ge C^{-p} \int_{(s^{-1}K_C)\setminus X_M}|\lambda-\gamma f(t)|^p\,d\mu(t).
\end{align*}
If $|\gamma|M<|\lambda|$, then
\begin{align*}
\e^p& \ge C^{-p} \int_{(s^{-1}K_C)\setminus X_M}(|\lambda|-|\gamma| M)^p\,\mathrm{d}\mu(t)\\
&> C^{-p}(|\lambda|-|\gamma| M)^p(\mu(K_C)-\mu(X_M))\\
&>\dfrac{9\mu(K_C)}{10C^p}(|\lambda|-|\gamma| M)^p>\dfrac{1}{2C^p}(|\lambda|-|\gamma| M)^p,
\end{align*}
which implies
\[|\gamma|>M^{-1}(|\lambda|-2^{1/p}C\e)=:\xi>0\]
If $|\gamma|M\ge|\lambda|$ then also $|\gamma|>\xi$, since $\xi<\dfrac{|\lambda|}{M}$.
\end{proof}
The set $LK$ is compact, so its measure is finite; pick $N\in\mathbb{N}$ with $\mu(LK)<N/2$. Set $r:=\max\{ 1, \|\cchi_{K}\|_{p,\omega}\}$ and $\lambda_j=\lambda+j\e/2Nr$, $j=0,...,N-1$.

For $j=0,...,N-1$ we choose $\delta_j>0$, $\gamma_j\in\Gamma$, $s_j\in S$ such that
$$
\|\lambda_j \cchi_{K}-\gamma_j T_{s_j}f\|_{p,\omega}<\delta_j.
$$
The choice of $\delta_j$ is specified later, but it will be less than $\e/4$. Since
$$
\|\lambda \cchi_{K}-\gamma_j T_{s_j}f\|_{p,\omega}<\delta_j+|\lambda-\lambda_j|\|\cchi_{K}\|_{p,\omega}<\delta_j + \frac\e{2}<\e,
$$
by the claim we have $s_j\in L$ and $|\gamma_j|>\xi$. Moreover, we have
\begin{align*}
\int_{K_C}|\lambda_j-\gamma_jf(s_{j}^{-1}t)|^p\,\mathrm{d}\mu(t)&<C^p\int_{K_C} \omega(t)^p|\lambda_j-\gamma_jf(s_{j}^{-1}t)|^p\,\mathrm{d}\mu(t)\\
&\le C^p\|\lambda_j \cchi_{K}-\gamma_j T_{s_j}f\|_{p,\omega}^p<C^p\delta_{j}^p.
\end{align*}
Set $K_{j}:=\{ t\in K_C:\, |\lambda_j-\gamma_j f(s_{j}^{-1}t)|\ge 4^{1/p}\delta_j C\}$. By trivial estimates,
\[C^p\delta_{j}^p>\int_{K_j}|\lambda_j-\gamma_j f(s_{j}^{-1}t)|^p\,\mathrm{d}\mu(t)\ge 4C^p\delta_{j}^p\mu(K_j),\]
so that
\[\mu(K_j)<\dfrac{1}{4}.\]
For $t\in K_{j}':=s_{j}^{-1}(K_C\setminus K_j)$ we have the estimate
\[|f(t)-\dfrac{\lambda_j}{\gamma_j}|<4^{1/p}C\dfrac{\delta_j}{|\gamma_j|}<4^{1/p}C\dfrac{\delta_j}{\xi}\]
and $\mu(K_{j}')\ge \mu(K_C)-\mu(K_j)>\dfrac{1}{2}$. The aim is to choose $\delta_j$ so that these sets are pairwise disjoint. For this, it is sufficient to choose $\delta_j$ so that the disks $B_j:=\{ z\in\mathbb{C}: |z-\lambda_j/\gamma_j|<4^{1/p}C\delta_j/\xi\}$ are pairwise disjoint. Suppose by induction that this is done for $k<j$. Set $\delta_j=\min\{ \delta_k:\, k<j\}$. If $B_j\cap(\cup_{k<j}B_k)=\emptyset$, then we are done. If not, divide $\delta_j$ by $2$ and  pick $\gamma_j\in\Gamma$, $s_j\in S$ accordingly. If we still have $B_j\cap(\cup_{k<j}B_k)\neq\emptyset$, continue in the same way. Either we arrive at an empty intersection on some step --- in this case we can pass to the next $j$ --- or we continue infinitely and get a sequence $\gamma_{j,n}\in\Gamma$, $s_{j,n}\in S$ such that $\|\lambda_j \cchi_{K}-\gamma_{j,n}T_{s_{j,n}}f\|_{p,\omega}\to0$ as $n\to\infty$. According to the Claim, we have $s_{j,n}\in L$ and $|\gamma_{j,n}|>\xi$, so that
\begin{align*}
\int_{s_{j,n}^{-1}(G\setminus K)}|f(t)|^p\omega(s_{j,n}t)^p\,\mathrm{d}\mu(t)&=\int_{G\setminus K}|(T_{s_{j,n}}f)(t)|^p\omega(t)^p\,\mathrm{d}\mu(t)\\
&\le \|\dfrac{\lambda_j}{\gamma_{j,n}}\cchi_{K}-(T_{s_{j,n}}f)(t)\|_{p,\omega}^p\\
&\le \dfrac{1}{\xi}\|\lambda_j \cchi_{K}-\gamma_{j,n}T_{s_{j,n}}f\|_{p,\omega}\to0.
\end{align*}
Since $G\setminus LK\subset s_{j,n}^{-1}(G\setminus K)$, we have
\begin{equation}\label{int-fw}
\int_{G\setminus LK} |f(t)|^p\omega(s_{j,n}t)^p\,\mathrm{d}\mu(t)\to0 \text{ as } n\to\infty.
\end{equation}
This is an exercise to show that \eqref{int-fw} implies $f\equiv0$ on $G\setminus LK$. [
If not, there is a compact set $F\subset G\setminus LK$ of positive measure and $\delta>0$ such that $|f|\ge\delta$ on $F$. Recall that $s_{j,n}\in L$ for every $j,n$. Pick $\delta_1>0$ such that the set $D=\{t\in LF: \omega(t)\le \delta_1\}$ has measure $\mu(D)<\frac1{10}\mu(F)$. Now
\begin{align*}
\int_F |f(t)|^p \omega(s_{j,n}t)^pd\mu(t) &\ge \int_{F\setminus s_{j,n}^{-1}D} \delta^p\delta_1^p \,\mathrm{d}\mu(t)
\\&\ge \delta^p\delta_1^p \big( \mu(F) - \mu(D) \big) > \frac9{10}\delta^p\delta_1^p \mu(F)
\end{align*}
and cannot tend to 0.]

But now it is easy to show that $f$ cannot be $(\Gamma,S)$-dense. Indeed, let $g\in G$ be such that $LKK^{-1}g\cap LKK^{-1}=\emptyset$. For every $s\in G$ we have then either $s^{-1}K\cap LK=\emptyset$ or $s^{-1}gK\cap LK=\emptyset$. In the first case, $T_sf$ vanishes on $K$, and in the second case, on $gK$. For all $\gamma\in \Gamma$ and $s\in S$, we have then 
\begin{align*}
\|\cchi_{K}+\cchi_{gK} - \gamma T_s f\|_{p,\omega}^p 
&\ge \int_K |1 - \gamma (T_s f)(t)|^p\omega^p(t)\,\mathrm{d}\mu(t) + \int_{gK} |1-\gamma (T_s f)(t)|^p \omega^p(t)\,\mathrm{d}\mu(t)
\\&\ge \min \Big\{ \int_K \omega^p(t)\,\mathrm{d}\mu(t),  \int_{gK} \omega^p(t)\,\mathrm{d}\mu(t) \Big\}
\end{align*}
so that $\gamma T_sf$ cannot approximate $\cchi_{K}+\cchi_{gK}$ arbitrarily well.

This shows that the choice of required $\delta_j$ is always possible, yielding the pairwise disjoint sets $K_{j}'$. It follows that $\mu(\cup K_{j}')>\dfrac{N}{2}>\mu(LK)$, in particular, $\cup K_{j}'\nsubseteq LK$. But by assumption, we have $K_{j}'\subset s_{j}^{-1}K\subset LK$ for every $j$. This contradiction proves the lemma.
\end{proof}

\begin{rem}
This lemma implies that in case of existence of $(\Gamma,S)$-dense vectors, not only $G$ cannot be compact, but also $S$ cannot be contained in a compact subset of a non-compact group $G$.
\end{rem}

\section{Proof of Theorem \ref{theorem A}}\label{sec-proof-A}

From now on, we assume the following.
\begin{defn}\label{assump}
We say that $(G,S,p,\omega,\Gamma)$ is an {\it admissible tuple} if $G$ is a second-countable locally compact non-compact group,
$S\subset G$, $1\le p<+\infty$, $\Gamma\subset\mathbb{C}$ is such that $\Gamma\setminus\{0\}$ is non-empty,
and $\omega$ is a locally $p$-integrable $S$-admissible weight on $G$.
\end{defn}

\begin{Theorem A}
If $(G,S,p,\omega,\Gamma)$ is admissible, the following conditions are equivalent:
\begin{enumerate}[label={$(\arabic*)$}]
\item There is a $(\Gamma,S)$-dense vector in $L^p(G,\omega)$.
\item For every increasing sequence of compact subsets  $( F_n)_{n\ge1}$ of $G$ of positive measure and for every sequence  $(\delta_n)_{n\ge1}$ of positive numbers, there are sequences $(s_n)_{n\ge1}\subset S
$, $(\lambda_n)_{n\ge1}\subset\Gamma\setminus\{0\}$ and a sequence of compact subsets $K_n\subset F_n$ such that the sets $s_{n}^{-1}F_n$ are pairwise disjoint, $\mu(F_n\setminus K_n)<\delta_n$ and
\[
\sum_{n,k\ge0;n\neq k}\dfrac{|\lambda_n|^p}{|\lambda_k|^p}\|\omega\|_{p,s_ns_{k}^{-1}K_{k}}^p<+\infty,
\]
with  $s_0=e$, $\lambda_0=1$ and $K_0=\emptyset$.
\item For every $N\ge1$, there exist $N$ vectors $\{ f_1,...,f_N\}\subset L^p(G,\omega)$ such that the set
\[\{ \lambda( T_sf_1,..., T_sf_N):\, \lambda\in \Gamma, \, s\in S\}\]
is dense in the direct sum of $N$ copies of $L^p(G,\omega)$.
\end{enumerate}
\label{1}
\end{Theorem A}

\begin{proof}[Proof of Theorem A]
Firstly, it is clear that $(3)\Rightarrow (1)$. Let us show that $(1)\Rightarrow(2)$. Assume that $s_0=e$, $\lambda_0=1$ and $K_0=\emptyset$. We can suppose that the sequence $(\delta_n)_{n\ge1}$ is decreasing with $\sum_{k\ge1}\delta_k<+\infty$ and $\delta_k<\mu(F_k)$. For each $k\ge1$, there exist a compact set
 $F_{k}'\subset F_{k}$ and $0<c_k<\frac{1}{4}$ such that $\mu(F_k\setminus F_{k}')<\dfrac{\delta_k}{2}$ and
\begin{equation}
 \underset{t\in F_{k}'}{\mathrm{ess }\inf}\,\omega(t)\ge c_{k}\quad,\quad\underset{t\in F_{k}'}{\mathrm{ess }\sup}\,\omega(t)\le c_{k}^{-1}.
 \label{11}
\end{equation}
Let $f\in L^p(G,\omega)$ be a $(\Gamma,S)$-dense vector. According to Lemma \ref{9}, there exist two sequences $(s_k)_{k\ge1}\subset S$ and $(\lambda_k)_{k\ge1}\subset\Gamma\setminus\{0\}$ such that, for each $k\ge1$, we have
 \begin{equation}
 \| 2 \cchi_{F_{k}'}-\lambda_k T_{s_k}f\|_{p,\omega}^p<c_{k}^{2p}\delta_k,
 \label{10}
 \end{equation}
and $s_{k}^{-1}F_k\cap s_{j}^{-1}F_j=\emptyset$ for every $1\le j< k$. By \eqref{11} and \eqref{10}, we get 
\[
c_{k}^{2p}\delta_{k}>\int_{F_{k}'}\vert \lambda_k(T_{s_k}f)(t)-2\vert^p\omega(t)^p\,\mathrm{d}\mu(t)
\ge c_{k}^p\int_{F'_{k}}\vert \lambda_k(T_{s_k}f)(t)-2\vert^p\,\mathrm{d}\mu(t),
\]
that is \[\Vert\lambda_k T_{s_k}f-2\cchi_{F_{k}'}\Vert^p_{p,F'_{k}}<c_{k}^p\delta_k<\dfrac{\delta_k}{4^p}<\delta_k.\] 
Set $K_{k}':=\{ t\in F_{k}':\, |\lambda_k f(s_{k}^{-1}t)|>1\}$. It follows that  
\begin{equation*}
\dfrac{\delta_k}{4}>c_{k}^p\delta_k>\int_{F_{k}'\setminus K_{k}'}|2-\lambda_k f(s_{k}^{-1}t)|^p\,\mathrm{d}\mu(t)\ge\mu(F_{k}'\setminus K_{k}'),
\end{equation*}
There exists a compact subset
$K_k\subset K_{k}'$ such that $\mu(K_{k}'\setminus K_{k})<\dfrac{\delta_k}4$. According to the above estimates, we get $\mu(F_k\setminus K_k)<\delta_k$. Moreover,
\begin{equation}
\sum_{k>0}\dfrac{1}{|\lambda_k|^p}\|\omega\|_{p,s_{k}^{-1}K_{k}}^p
\le\sum_{k}\int_{s_{k}^{-1}K_{k}}\omega(t)^p|f(t)|^p\,d\mu(t)
\le\| f\|_{p,\omega}^p<+\infty.
\label{12}
\end{equation}
Fix $n\ge1$. Note that, for every $k\neq n$ and $t\in s_ns_{k}^{-1}K_k$, we have  $t\in G\setminus K_{n}'$ and $|\lambda_k(T_{s_n}f)(t)|>1$. Moreover, the sets $s_ns_{k}^{-1}K_k$ are pairwise disjoint. Hence
\begin{align}
\sum_{k,k\neq n}\dfrac{|\lambda_n|^p}{|\lambda_k|^p}\|\omega\|^p_{p,s_{n}s_{k}^{-1}K_k}
&\le\sum_{k,k\neq n}\int_{s_{n}s_{k}^{-1}K_k}\omega(t)^p| \lambda_n(T_{s_n}f)(t)|^p\,\mathrm{d}\mu(t) \nonumber\\
&\le \int_{G\setminus K_{n}'}\omega(t)^p|\lambda_n (T_{s_n}f)(t)|^p\,\mathrm{d}\mu(t) \nonumber\\
&\le c_{n}^{-p}\int_{F_{n}'\setminus K_{n}'}|\lambda_n (T_{s_n}f)(t)|^p\,\mathrm{d}\mu(t) \nonumber
\\&\quad + \int_{G\setminus F_n'} \omega(t)^p| 2\cchi_{F_n'} - \lambda_n (T_{s_n}f)(t)|^p\,\mathrm{d}\mu(t) \nonumber
\\&\le c_{n}^{-p}\mu(F_{n}'\setminus K_{n}')+\|2\cchi_{F_{n}'}-\lambda_n T_{s_n}f\|_{p,\omega}^p\nonumber\\
&<\delta_n+ c_{n}^{2p}\delta_n<2\delta_n.
\label{13}
\end{align}
According to \eqref{12} and \eqref{13} and since $\sum_{n\ge1}\delta_n<+\infty$, we obtain that the series in the condition $(2)$ of Theorem \ref{theorem A} converges.

$(2)\Rightarrow (3)$. Without  loss of generality we can assume that $N=2$. Since $L^p(G,\omega)$ is separable, there exists a countable sequence  $\{ (p_n,q_n):\,n\ge1\}\subset \mathcal{K}(G)\times \mathcal{K}(G)$ dense in $L^p(G,\omega)\times L^p(G,\omega)$ (where $\mathcal{K}(G)$ is the set of essentially bounded functions on $G$ with compact support). Let $\{ (g_n,h_n):\, n\ge1\}$ be a sequence composed by the terms $(p_n,q_n)$ so that each term appears infinitely many times. 

Set $F_n:=\cup_{k=1}^{n} S_k$ where $S_k:=\mathrm{supp}\,g_k\cup\mathrm{supp}\,h_k$ and $\alpha_n:=\max\left\{ \| g_n\|_{\infty}^p,\| h_n\|_{\infty}^p\right\}$. Let also $(\delta_n')$ be a decreasing sequence such that $0<\delta_{n}'<\alpha_{n}^{-1}2^{-n}$. Since $\|\omega\|_{p,F_n}^p<+\infty$ for every $n\ge1$, there exists a positive decreasing sequence $(r_n)_{n\ge1}$ such that $\|\omega\|_{p,E}^p<\delta_{n}'$ for every $E\subset F_n$ satisfying $\mu(E)<r_n$. Set $\delta_n=\min(r_n ;\delta_{n}')>0$. 

By assumption, there exist $(s_n)_{n\ge1}\subset S$, $(\lambda_n)_{n\ge1}\subset\Gamma\setminus\{0\}$, and $K_n\subset F_n$ such that the sets $s_{n}^{-1}F_n$ are pairwise disjoint,  $\mu(F_n\setminus K_n)<\delta_n$ and
\[\sum_{k\ge0}a_k<+\infty\quad\text{where } a_k=\sum_{n;n\neq k}\dfrac{|\lambda_n|^p}{|\lambda_k|^p}\|\omega\|_{p,s_ns_{k}^{-1}K_k}^p\]
with $s_0=e$, $\lambda_0=1$ and $K_0=\emptyset$. Hence $a_k\underset{k\to+\infty}{\longrightarrow}0$, thus there exists a subsequence $(a_{l_k})_{ k\ge0}$ of $(a_k)_{k\ge0}$ (with $l_0=0$, $l_k\ge k$) such that $\sum_{k\ge1} \alpha_ka_{l_k}<+\infty$. We can also choose $l_k$ so that the pair $(g_{l_k},h_{l_k})$ is the same as $(g_k,h_k)$ for every $k\ge1$. We then have
\begin{equation}
\sum_{n,k;n\neq k}\alpha_k\dfrac{|\mu_n|^p}{|\mu_k|^p}\Vert \omega\Vert_{p,t_nt_{k}^{-1}E_k}^p<+\infty
\label{14}
\end{equation}
where $t_n=s_{l_n}$, $\mu_n=\lambda_{l_n}$ and $E_k=K_{l_k}$. Moreover, the sets $t_{n}^{-1}E_n$ are pairwise disjoint, and for every $n\ge1$, $\mu(S_n\setminus E_n)\le  \mu(F_{l_n}\setminus K_{l_n})<\delta_{l_n}\le\delta_n$. So, the series
\[f_1:=\sum_{k\ge0}\dfrac{1}{\mu_k}T_{t_{k}^{-1}}(g_k\cchi_{E_k})\quad\text{ and }\quad f_2=:\sum_{k\ge0}\dfrac{1}{\mu_k}T_{t_{k}^{-1}}(h_k\cchi_{E_k}),\]
are convergent. Indeed, according to \eqref{14}, we get 
\[
\|f_1\|_{p,\omega}^p
\le \sum_k \frac1{|\mu_k|^p} \int_{t_k^{-1}E_k} \omega(t)^p |g_k(t_kt)|^p dt
\le \sum_{k} \alpha_{k}\dfrac{1}{|\mu_k|^p}\| \omega\|_{p,t_{k}^{-1}E_k}^p<+\infty.
\]
Similarly, one can check that $\|f_2\|_{p,\omega}^p<\infty$.
To finish the proof, we have to show that  $\Vert \mu_n(T_{t_n}f_1,T_{t_n}f_2)-(g_n,h_n)\Vert\underset{n\to+\infty}{\longrightarrow}0$. Note that, for any $n\ge1$ and $i\neq j$ such that $i,j\neq n$, we have   $t_nt_{i}^{-1}E_{i}\cap t_nt_{j}^{-1}E_{j}=\emptyset$ and $E_n\cap t_nt_{i}^{-1}E_i=\emptyset$. Now, for $n\ge1$, we have 
\begin{align*}
\Vert\mu_n T_{t_n}f_1-g_n\Vert_{p,\omega}^p&=\underset{k,k\neq n}{\sum}\dfrac{|\mu_n|^p}{|\mu_k|^p}\Vert T_{t_nt_{k}^{-1}}(g_k\cchi_{E_k})\Vert_{p,\omega}^p+\Vert g_n(\cchi_{E_n}-1)\Vert_{p,\omega}^p \nonumber\\
&\le\underset{k,k\neq n}{\sum}\dfrac{|\mu_n|^p}{|\mu_k|^p}\Vert g_k\Vert_{\infty}^p\|\omega\|_{p,t_nt_{k}^{-1}E_k}^p+\Vert g_n\Vert^p_{\infty} \|\omega\|_{p,\mathrm{supp} (g_n)\setminus E_n}^p \nonumber\\
&\le \underset{k,k\neq n}{\sum}\dfrac{|\mu_n|^p}{|\mu_k|^p}\alpha_k\|\omega\|_{p,t_nt_{k}^{-1}E_k}^p+\alpha_n\delta_{l_n}' \nonumber\\
&\le \displaystyle\e_n+ 2^{-n},
\end{align*}
where
\[\displaystyle\e_n:=\sum_{k;k\neq n}\frac{|\mu_n|^p}{|\mu_k|^p} \alpha_k \|\omega\|_{p,t_nt_{k}^{-1}E_k}^p.\]
Similarly, $
\Vert\mu_n T_{t_n}f_2-h_n\Vert_{p,\omega}^p\le\e_n+2^{-n}
$. By
 \eqref{14}, we get  $\e_n\to0$. Hence,
\[
\|\mu_nT_{t_n}f_1-g_n\|_{p,\omega} + \|\mu_nT_{t_n}f_2-h_n\|_{p,\omega}
\le 2(2^{-n}+\e_n)^{1/p}\underset{n\to+\infty}{\longrightarrow}0.
\]
\end{proof}

\begin{rem}
If $G$ is a countable discrete group, then in condition $(2)$ of Theorem \ref{theorem A} we can suppose that the set $K_n$ is equal to $F_n$.
\end{rem}

The $(\Gamma,S)$-density condition, for $\Gamma$ unbounded or unbounded away from zero, is strictly weaker than just $S$-density, as can be seen on the following example.

\begin{ex}\label{no-S-dense}
Let $G=\mathbb{Z}$, let $S=( n_k)_{n\ge1}$ be a strictly increasing sequence of positive integers, let $1\le p<+\infty$ and let $\Gamma\subset\mathbb{C}$ be such that $\Gamma\setminus\{0\}$ is non-empty. Suppose that one of the following conditions holds:
\begin{enumerate}[label={$(\arabic*)$}]
\item $\Gamma$ is unbounded and \[ \omega(n)=\begin{cases}
 2^{-n} &\text{ if } n\ge0\\
 1  &\text{ if } n\le0
\end{cases}.
\]
\item $0$ is a limit point of $\Gamma$ and \[ \omega(n)=\begin{cases}
 1 &\text{ if } n\ge0\\
 2^{n} &\text{ if } n\le0
\end{cases}.
\]
\end{enumerate}
Then $\omega$ is a $\mathbb{Z}$-admissible weight such that $L^p(\mathbb{Z},\omega)$ has a $(\Gamma,S)$-dense vector, but has no $S$-dense vectors.
\label{15}
\end{ex}
\begin{proof}We prove the case $(1)$ only, and the case $(2)$ is proved similarly.
It is clear that $\omega$ is $\Z$-admissible.  Since $\underset{n\in\mathbb{N}}{\inf}\,\max(\omega_n,\omega_{-n})=1\neq0$, it follows from \cite[Corollary 13]{AK} that $L^p(\Z,\omega)$ has no $S$-dense vector. Let us show that the condition $(2)$ of Theorem \ref{theorem A} holds.
Let $( F_n)_{n\ge1}$ be an increasing sequence of finite sets of $\Z$. For every $n\in\N$, set $D_n:=\max\{ t: t\in F_n\}$, $d_n:=\min\{ t:\, t\in F_n\}$, $s_0=0$, $\lambda_0=1$ and $F_0=\emptyset$. We denote by $\mathrm{card}(F)$ the cardinality of a set $F$. By induction, we construct a subsequence $(s_n)_{n\ge1}\subset S$ and a sequence $(\lambda_n)_{n\ge1}\subset\Gamma\setminus\{0\}$ such that for every $k=0,1,..., n-1$,
\begin{enumerate}[label={$(\arabic*)$}]
\item $\dfrac{|\lambda_k|^p}{|\lambda_n|^p}\,\mathrm{card}(F_n)<\dfrac1{2^n}$ (this choice  of $\lambda_n$ is possible since $\Gamma$ is unbounded);
\item $D_n +s_k-d_k<s_n$;
\item $\dfrac{|\lambda_n|^p}{|\lambda_k|^p}\,\mathrm{card}(F_n)\, \omega(d_n+s_n-s_k)^p <\dfrac{1}{2^n}$ (when choosing $s_n$, both (2) and (3) can be satisfied since $\omega(n)\to0$ as $n\to+\infty$).
\end{enumerate}
It is clear that condition $(2)$ implies that $(F_k-s_k)\cap (F_n-s_n)=\emptyset$ for every $k<n$. Since $\omega$ is decreasing and bounded by 1,
 $F_k\subset F_n$ for every $k<n$, 
we have
\begin{align*}
\sum_{n,k\ge0; n\neq k}\dfrac{|\lambda_n|^p}{|\lambda_k|^p}\|\omega\|_{p,F_{k}+s_n-s_k}^p
&=\sum_{n\ge0} \sum_{k\ne n}\dfrac{|\lambda_n|^p}{|\lambda_k|^p}\sum_{t\in F_k}\omega(t+s_n-s_k)^p
\\ &\le\sum_{n\ge0}\Bigg( \sum_{k<n}\dfrac{|\lambda_n|^p}{|\lambda_k|^p} \sum_{t\in F_n}\omega(t+s_n-s_k)^p+\sum_{k>n}\dfrac{|\lambda_n|^p}{|\lambda_k|^p}\,\mathrm{card}(F_k)\Bigg)\\
&\le\sum_{n\ge0}\left( \sum_{k<n}\dfrac{|\lambda_n|^p}{|\lambda_k|^p}\,\mathrm{card}(F_n)\,\omega(d_n+s_n-s_k)^p+\sum_{k>n}\dfrac{|\lambda_n|^p}{|\lambda_k|^p}\,\mathrm{card}(F_k)\right)\\
&<\sum_{n\ge0}\Big( \sum_{0\le k< n}\dfrac{1}{2^n}+\sum_{k>n}\dfrac{1}{2^k}\Big)\quad (\text{by } (1) \text{ and } (3)) \\
&=\sum_{n\ge0}\Big(\dfrac{n}{2^n}+\dfrac{1}{2^n}\Big)<+\infty.
\end{align*}
This proves that $L^p(\mathbb{Z},\omega)$ has a $(\Gamma,S)$-dense vector.
\end{proof}

\section{Sufficient conditions}

The condition of Theorem \ref{theorem A} is universal, but sometimes difficult to check. It is therefore convenient to have simpler sufficient conditions, even if they are more restrictive.
We make on $G,S,p,\Gamma,\omega$ the same assumptions as in Section 4, see Definition \ref{assump}.

\begin{thm}\label{suff-sup}
Suppose that $(G,S,p,\omega,\Gamma)$ is an admissible tuple. If for every compact subset $K$ of $G$ and for all $\e,\delta>0$, there exist $s\in S$ , $\lambda\in \Gamma\setminus\{0\}$ and a compact subset $E\subset K $ such that $\mu(K\setminus E)<\delta$ and 
\[
|\lambda|\,\underset{sE}{\mathrm{ess }\sup }\,\omega<\e \quad\text{ and }\quad 
\frac1{|\lambda|}\,\underset{s^{-1}E}{\mathrm{ess }\sup}\,\omega<\e,
\]
then $L^p(G,\omega)$ has a $(\Gamma,S)$-dense vector.
\label{18}
\end{thm}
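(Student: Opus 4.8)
The plan is to prove the implication directly, by \emph{constructing} a single $(\Gamma,S)$-dense vector, following the scheme of the implication $(2)\Rightarrow(3)$ in Theorem~\ref{1} but feeding it the present hypothesis in place of condition~$(2)$. Since $G$ is second-countable, $L^p(G,\omega)$ is separable; I fix a sequence $(g_n)_{n\ge1}\subset\mathcal K(G)$ of essentially bounded, compactly supported functions which is dense and in which every term is repeated infinitely often. I set $\tilde F_n:=\bigcup_{k\le n}\operatorname{supp}g_k\ \cup\ \bigcup_{k<n}s_k^{-1}E_k^0$, an increasing family of compacta (the $s_k,E_k^0$ being produced inductively below). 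At step $n$ I apply the hypothesis to $\tilde F_n$ with parameters $\e_n,\delta_n'>0$ still to be tuned, obtaining $s_n\in S$, $\lambda_n\in\Gamma\setminus\{0\}$ and a compact $E_n^0\subset\tilde F_n$ with $\mu(\tilde F_n\setminus E_n^0)<\delta_n'$ and
\[
|\lambda_n|\,\underset{s_nE_n^0}{\mathrm{ess}\sup}\,\omega<\e_n,\qquad \frac1{|\lambda_n|}\,\underset{s_n^{-1}E_n^0}{\mathrm{ess}\sup}\,\omega<\e_n .
\]
The candidate vector is $f=\sum_{k\ge1}\lambda_k^{-1}T_{s_k^{-1}}\big(g_k\cchi_{E_k}\big)$, where $E_k\subset E_k^0$ are the shrunk sets described next.

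The device that makes this work is to progressively \emph{shrink} the $E_k$. Having obtained $E_n^0$ at step $n$, I replace each previously defined set by $E_k\leftarrow E_k\cap s_kE_n^0$ for $k<n$; the final set is thus $E_k=E_k^0\cap\bigcap_{m>k}s_kE_m^0$, and by construction $s_k^{-1}E_k\subset E_n^0$ for every $n>k$. Since $s_k^{-1}E_k^0\subset\tilde F_m$ whenever $k<m$, each such shrinking removes from $E_k$ a set of measure at most $\mu(\tilde F_m\setminus E_m^0)<\delta_m'$, so the total loss $\mu(E_k^0\setminus E_k)\le\sum_{m>k}\delta_m'$ is as small as desired once the $\delta_m'$ are chosen summable with small sum. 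This is what lets the first bound above — which localizes the smallness of $\omega$ to the single set $s_nE_n^0$ — be applied to the translated masses $s_ns_k^{-1}E_k$ with $k<n$.

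Writing $T_{s_n}T_{s_k^{-1}}=T_{s_ns_k^{-1}}$ one has $\lambda_nT_{s_n}f=g_n\cchi_{E_n}+\sum_{k\neq n}\frac{\lambda_n}{\lambda_k}T_{s_ns_k^{-1}}(g_k\cchi_{E_k})$, and it remains to show the error tends to $0$ in $\|\cdot\|_{p,\omega}$ as $n\to\infty$; density then follows from the infinite repetition of the targets. I bound each summand's norm by $\frac{|\lambda_n|}{|\lambda_k|}\|g_k\|_\infty\big(\int_{s_ns_k^{-1}E_k}\omega^p\,d\mu\big)^{1/p}$ and split the sum. For the forward terms $k<n$, the inclusion $s_ns_k^{-1}E_k\subset s_nE_n^0$ together with the first bound gives each term $\le\|g_k\|_\infty\mu(E_k)^{1/p}\e_n/|\lambda_k|$, and choosing $\e_n$ small (knowing all index-$<n$ data) makes their sum $<2^{-n}$. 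For the backward terms $k>n$, left-invariance yields $\int_{s_ns_k^{-1}E_k}\omega^p=\int_{s_k^{-1}E_k}\omega(s_n\,\cdot\,)^p\le M(s_n)^p\int_{s_k^{-1}E_k}\omega^p$, and the second bound at step $k$ gives $\int_{s_k^{-1}E_k}\omega^p\le\mu(E_k)(\e_k|\lambda_k|)^p$, so each term is $\le|\lambda_n|M(s_n)\|g_k\|_\infty\mu(E_k)^{1/p}\e_k$; since $B_k:=\max_{n<k}|\lambda_n|M(s_n)$ is finite and known at step $k$, imposing $B_k\|g_k\|_\infty\mu(\tilde F_k)^{1/p}\e_k<2^{-k}$ forces, for each fixed $n$, the backward sum to be $\le\sum_{k>n}2^{-k}=2^{-n}$. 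The diagonal term obeys $\|g_n\cchi_{E_n}-g_n\|_{p,\omega}^p\le\|g_n\|_\infty^p\int_{\operatorname{supp}g_n\setminus E_n}\omega^p$, which is small by local $p$-integrability once $\mu(\operatorname{supp}g_n\setminus E_n)\le\delta_n'+\sum_{m>n}\delta_m'$ is small. The same smallness makes $\|f\|_{p,\omega}\le\sum_k\|g_k\|_\infty\mu(E_k)^{1/p}\e_k<\infty$, so $f\in L^p(G,\omega)$.

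The step I expect to be the real obstacle is the control of the forward cross-terms. A freshly produced $s_n$ carries an operator norm $M(s_n)$ over which the hypothesis gives no prior control, so the brute estimate $\int_{s_ns_k^{-1}E_k}\omega^p\le M(s_n)^p\int_{s_k^{-1}E_k}\omega^p$ is useless here (the factor $|\lambda_n|^pM(s_n)^p$ is revealed only after $\delta_n',\e_n$ are fixed). Only the first bound can absorb these terms, and it applies only after the shrinking forces $s_k^{-1}E_k\subset E_n^0$; obtaining this containment at the negligible cost $O(\sum_{m>k}\delta_m')$ in measure is the heart of the matter. I deliberately do not reproduce the disjointness of the sets $s_n^{-1}F_n$ demanded in condition~$(2)$ of Theorem~\ref{1}: arranging it would force the $s_n$ to leave every compact set, which cannot be extracted from the present hypothesis by itself (that fact only becomes available \emph{a posteriori}, once a dense vector is known to exist). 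Bounding the error by an absolutely convergent series of individual norms, rather than by a direct-sum identity, sidesteps the need for disjointness entirely.
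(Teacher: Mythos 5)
Your proof is correct, and it takes a genuinely different route from the paper's. The paper proves Theorem \ref{18} by verifying condition $(2)$ of Theorem \ref{1} and then invoking that theorem; since condition $(2)$ demands that the sets $s_n^{-1}F_n$ be pairwise disjoint, the paper first needs Lemma \ref{17}, a separate measure-packing argument showing that under the present hypothesis the translate $s$ can always be chosen outside any prescribed compact set $L$ (otherwise almost-disjoint sets $t_nE_n$ of measure bounded below would accumulate inside the compact set $LE_0\cup L^{-1}E_0$). You bypass both: you run the series construction of $(2)\Rightarrow(3)$ directly on the hypothesis, replace the disjoint-support identity for $\|\lambda_nT_{s_n}f-g_n\|_{p,\omega}^p$ by the plain triangle inequality --- legitimate because your tuning of $\e_n,\delta_n'$ makes the norms themselves, not just their $p$-th powers, summable --- and secure the crucial forward inclusions $s_k^{-1}E_k\subset E_n^0$ for $k<n$ by the nested shrinking $E_k=E_k^0\cap\bigcap_{m>k}s_kE_m^0$ at measure cost $\sum_{m>k}\delta_m'$. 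This shrinking plays exactly the role of the paper's sets $K_n=F_n\cap E_n'\cap s_n\big(\bigcap_{k>n}E_k'\big)$, and your forward/backward split (step-$n$ essential supremum for $k<n$; the operator norm $M(s_n)$ together with the step-$k$ essential supremum for $k>n$) coincides with the paper's estimates, so the analytic core is shared; what differs is the assembly. Your route buys economy: Lemma \ref{17} and the detour through Theorem \ref{1} disappear entirely. The paper's route buys modularity: once condition $(2)$ is verified, Theorem \ref{1} also yields the $N$-tuple statement (its condition $(3)$) for free, and Lemma \ref{17} has independent interest. One side remark of yours is factually wrong, though it costs your argument nothing: you assert that forcing the $s_n$ to leave every compact set ``cannot be extracted from the present hypothesis by itself'' and only becomes available once a dense vector exists; Lemma \ref{17} extracts precisely this escape property from the hypothesis alone, with no dense vector assumed.
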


We could of course state the theorem just with one parameter $\e$, implying $\delta=\e$, and get an equivalent statement.
For the proof, we need the following lemma.
\begin{lem}
Fix a compact set $L\subset G$. Under the assumptions of Theorem \ref{suff-sup}, $s$ can in addition be chosen outside of $L$.
\label{17}
\end{lem}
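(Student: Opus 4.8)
The plan is to argue by contradiction, exploiting that the hypothesis of Theorem~\ref{suff-sup} holds for \emph{all} values of $\varepsilon$ while the modulus $|\lambda|$ of the scalar is left entirely free. Fix the compact set $L$; enlarging it if necessary, we may assume $e\in L=L^{-1}$. Fix also a compact $K$, which we may take of positive measure (the null case is handled at the end), together with $\varepsilon,\delta>0$; replacing $\delta$ by $\min(\delta,\mu(K)/2)$ only strengthens the desired conclusion, so we assume $\delta<\mu(K)$. Suppose towards a contradiction that \emph{every} triple $(s,\lambda,E)$ witnessing the conclusion for $(K,\varepsilon,\delta)$ has $s\in L$. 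Applying the hypothesis to $(K,\varepsilon_n,\delta)$ with $\varepsilon_n=\varepsilon 2^{-n}$, I obtain $s_n\in S$, $\lambda_n\in\Gamma\setminus\{0\}$ and compact $E_n\subset K$ with $\mu(K\setminus E_n)<\delta$ and both smallness conditions at level $\varepsilon_n$. Since an $\varepsilon_n$-witness is a fortiori an $\varepsilon$-witness, the contradiction hypothesis forces $s_n\in L$ for every $n$.

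Next I would eliminate the dependence on $|\lambda_n|$ by multiplying the two bounds. From $|\lambda_n|\,\operatorname{ess\,sup}_{s_nE_n}\omega<\varepsilon_n$ and $|\lambda_n|^{-1}\operatorname{ess\,sup}_{s_n^{-1}E_n}\omega<\varepsilon_n$ one gets $(\operatorname{ess\,sup}_{s_nE_n}\omega)(\operatorname{ess\,sup}_{s_n^{-1}E_n}\omega)<\varepsilon_n^2$, so at least one of the two essential suprema is below $\varepsilon_n$. Passing to a subsequence, I may assume it is always the first and set $A_n:=s_nE_n$ (the other case is identical, using $s_n^{-1}\in L=L^{-1}$). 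Because the Haar measure is left-invariant, $\mu(A_n)=\mu(E_n)\ge\mu(K)-\delta=:c_0>0$, while $s_n\in L$ gives $A_n\subset LK=:R$, a fixed compact set of finite measure. Thus I have produced sets $A_n\subset R$ with $\mu(A_n)\ge c_0$ on which $\operatorname{ess\,sup}_{A_n}\omega<\varepsilon_n\to0$.

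Finally I would invoke positivity of the weight. Since $\omega>0$ almost everywhere, the sets $R_\varepsilon:=\{t\in R:\omega(t)<\varepsilon\}$ decrease, as $\varepsilon\downarrow0$, to the null set $\{t\in R:\omega(t)=0\}$; as $\mu(R)<\infty$, continuity from above yields $\mu(R_\varepsilon)\to0$. Choosing $\varepsilon^\ast>0$ with $\mu(R_{\varepsilon^\ast})<c_0$ and then $n$ so large that $\varepsilon_n<\varepsilon^\ast$, I find $A_n\subset R_{\varepsilon_n}\subset R_{\varepsilon^\ast}$, whence $\mu(A_n)<c_0$, contradicting $\mu(A_n)\ge c_0$. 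Therefore some witness for $(K,\varepsilon,\delta)$ must have $s\notin L$; this settles the positive-measure case, and the null-measure case follows since the positive-measure case already guarantees $S\setminus L\neq\emptyset$, so any such $s$ works there trivially. The main obstacle is exactly the free modulus $|\lambda_n|$: neither smallness condition alone confines $\omega$, and the decisive point is that taking their product cancels $\lambda_n$ and leaves a set of fixed positive measure, trapped by left-invariance inside the fixed compact $LK$, on which $\omega$ would be forced to vanish in the limit.
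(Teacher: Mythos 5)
Your proof is correct, and its core mechanism is genuinely different from the paper's. The preparatory steps coincide: both arguments go by contradiction, symmetrize $L$, and reduce to unweighted smallness of $\omega$ on one of $s_nE_n$, $s_n^{-1}E_n$ (the paper splits into the cases $|\lambda_n|\ge 1$ and $|\lambda_n|<1$; your multiplication of the two bounds is an equivalent device). From there the paper follows the scheme of \cite[Theorem 8]{AK}: it recursively builds nested compacts $K_{n+1}\subset E_n\subset K_n$, taking the next threshold $\e_{n+1}=\mathrm{ess\,inf}_{t_nK_{n+1}}\,\omega>0$, so that the translates $D_n=t_nE_n$ become pairwise almost disjoint and $\mu\bigl(\bigcup_{n\le N}D_n\bigr)$ grows linearly in $N$, contradicting $D_n\subset LE_0\cup L^{-1}E_0$. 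You instead keep $K$ and $\delta$ fixed, let only $\e_n\to0$, and use positivity of $\omega$ together with continuity from above: inside the fixed finite-measure set $R=LK$ the sublevel sets $\{\omega<\e\}$ decrease to a null set, so for small $\e$ they cannot contain any $A_n$, whose measure exceeds the fixed constant $c_0=\mu(K)-\delta$. This is shorter (no induction, no nested compacts, no ess-inf bookkeeping) and buys a genuine technical simplification: since your repeated applications of the hypothesis use the same pair $(K,\delta)$ and only a smaller $\e_n$, every witness produced is automatically a witness for the original triple $(K,\e,\delta)$, so the contradiction hypothesis yields $s_n\in L$ immediately; in the paper's version, where the compact shrinks to $K_n$ and the admissible defect to $\delta_1/9^n$, one must additionally check that the accumulated defect $\mu(K\setminus E_n)$ stays below $\delta$, which requires a more careful budget than the one written there. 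Two further remarks: the inclusion $A_n\subset R_{\e_n}$ holds only up to a $\mu$-null set (the essential supremum controls $\omega$ only almost everywhere), which is all the measure comparison needs; and your treatment of the null-measure case of $K$ (extracting some $s\in S\setminus L$ from the positive-measure case) is in fact cleaner than the paper's appeal to Lemma \ref{noncompact}, since that lemma presupposes the existence of a $(\Gamma,S)$-dense vector, which is not among the hypotheses here.
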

\begin{proof}
Let $K,L$ be two compact subsets of $G$. Assume, towards a contradiction, that for some $\e,\delta>0$ the assumptions of Theorem \ref{suff-sup} are only satisfied if $s\in S\cap L$. This implies in particular that $\mu(K)>0$, otherwise $E=\emptyset$ and any $s\in S\setminus L$ will do  (by Lemma \ref{noncompact}, $S$ cannot be contained in $L$). We can clearly decrease $\delta$ as to have $0<\delta<\mu(K)$.

Let $E_0\subset K$, $s_0\in S\cap L$ and $\lambda_0\in\Gamma\setminus \{0\}$ satisfy the assumptions of the theorem with $\e,\delta>0$. 
Set 
\[t_0:=
    \begin{cases}
    s_0 &\text{ if }|\lambda_0|\ge1 \\
    s_{0}^{-1} &\text{ if }|\lambda_0|<1
    \end{cases}.
    \]
Then
\[
\underset{t_0E_0}{\mathrm{ess }\sup }\,\omega<\e
\]
and $\mu(E_0)>\mu(K)-\delta=\delta_1>0$. There is next a compact subset $K_1\subset E_0$ such that $\mu(E_0\setminus K_1)<\dfrac{\delta_1}{9}$ and  $\e_1:=\underset{t_0K_1}{\mathrm{ess }\inf}\,\omega>0$.

By induction, we can choose for $n\ge0$ sequences $E_n$, $K_n$ of compact sets, $s_n\in S\cap L$, $\lambda_n\in\Gamma\setminus\{ 0\}$, $\e_n>0$ (with $K_0:=K$ and $\e_0:=\e$) as follows. For a given $n\ge1$, one chooses first $E_n$, $\lambda_n$, $s_n$ such that
$$
E_n\subset K_n, \qquad \mu(K_n\setminus E_n)<\dfrac{\delta_1}{9^n},
\qquad
|\lambda_n|\,\underset{s_nE_n}{\mathrm{ess }\sup }\,\omega<\e_n, \qquad 
\frac1{|\lambda_n|}\,\underset{s_n^{-1}E_n}{\mathrm{ess }\sup}\,\omega<\e_n.
$$
Setting \[t_n:=
    \begin{cases}
    s_n &\text{ if }|\lambda_n|\ge1 \\
    s_{n}^{-1} &\text{ if }|\lambda_n|<1
    \end{cases},\]
 we have then
\begin{equation}
\underset{t_nE_n}{\mathrm{ess }\sup }\,\omega<\e_n.
\label{27}
\end{equation}
Now we choose $K_{n+1}$ so that
\begin{equation}
K_{n+1}\subset E_n, \qquad \mu(E_n\setminus K_{n+1})<\frac{\delta_1}{9^{n+1}}, \qquad
  \e_{n+1}:=\underset{t_n K_{n+1}}{\mathrm{ess }\inf}\,\omega>0.
  \label{28}
\end{equation}
Setting $D_n=t_nE_n$, as in the proof of \cite[Theorem 8]{AK},  we first note that
$$
\mu(D_n) = \mu(E_n) > \mu(K_n)-\frac{\delta_1}{9^n} > \mu(E_{n-1})-\frac{2\delta_1}{9^n},
$$
so that by induction
$$
\mu(D_n) > \mu(E_0) - \sum_{k=1}^n \frac{2\delta_1}{9^k} >\frac34\,\delta_1.
$$
Next, the estimates \eqref{27} and \eqref{28} imply that $\mu(t_n K_{n+1} \cap t_k E_k)=0$ for any $k>n$, so that up to a null set, 
$$
D_n \cap D_k = t_n E_n \cap t_k E_k\subset t_nE_n\setminus (t_n K_{n+1}),
$$
and
$$
\mu\Big( \bigcup_{k>n} (D_n \cap D_k) \Big) \le \mu \big( t_nE_n\setminus (t_n K_{n+1}) \big) 
< \frac{\delta_1}{9^{n+1}}.
$$
It follows that
$$
\mu\Big( \bigcup_{k<n} (D_n \cap D_k) \Big) \le \sum_{k<n} \frac{\delta_1}{9^{k+1}} < \frac{\delta_1}{8},
$$
and as a consequence
\begin{align*}
\mu\Big(\bigcup_{n\le N}D_n\Big) &\ge \sum_{n=1}^N \mu\Big(D_n\setminus \bigcup_{k< n}D_k\Big)
\ge \sum_{n=1}^N \Big[ \mu(D_n) - \mu\Big(\bigcup_{k<n} (D_n \cap D_k) \Big) \Big]
\\& > \sum_{n=1}^N [ \frac34\,\delta_1 - \frac18\,\delta_1 ] = \frac{5N}8\, \delta_1.
\end{align*}
But now $\mu(LE_0\cup L^{-1}E_0)\ge\mu(\cup_{k=1}^{\infty}D_k)=+\infty$, which is impossible since $LE_0\cup L^{-1}E_0$ is compact.
\end{proof}

\begin{proof}[Proof of Theorem \ref{suff-sup}]
 For $K\subset G$, denote $\underset{K}{\mathrm{ess }\sup }\,\omega$ by $\|\omega\|_{\infty,K}$. Let us show that condition $(2)$ of Theorem \ref{theorem A} holds. Let $(F_n)_{n\ge1}$ be an increasing sequence of compact sets of positive measure and let $(\delta_n)_{n\ge1}$ be a sequence of positive numbers. We can assume that $\delta_1<\dfrac12$, $\delta_n<\mu(F_n)$ and $\delta_{n+1}<\dfrac{\delta_n}{2}$ for every $n\ge1$. Set $F_{0}=\emptyset$, $s_0=e$ and $\lambda_0=1$. By induction, we choose sequences $(s_n)_{n\ge1}\subset S$ and $(\lambda_n)_{n\ge1}\subset\Gamma\setminus\{0\}$ as follows. If $s_k$, $\lambda_k$ are chosen for $k=0,\dots,n-1$, set
$$
E_n:=(\bigcup_{k<n}s_kF_n)\cup(\bigcup_{k<n}F_k\cup s_{k}^{-1}F_k)\quad\text{ and }
\quad C_n:=\max_{k<n}\| T_{s_k}\|\ge1.
$$
Choose $\e_n>0$ so that $\e_n^p \,\mu(E_n) \max(n,C_n^p) < 2^{-n}$. There exist a compact set $E_{n}'\subset E_n$ and $s_n\in S$, $\lambda_n\in\Gamma$ such that $\mu(E_n\setminus E_{n}')<\dfrac{\delta_n}{2}$,
\[
|\lambda_n|\|\omega\|_{\infty,s_nE_{n}'}<\e_n\min_{k<n}|\lambda_k|\quad\text{ and }\quad \dfrac{1}{|\lambda_n|}\|\omega\|_{\infty,s_{n}^{-1}E_{n}'}<\dfrac{\e_n}{\max_{k<n}|\lambda_k|}.
\]
Moreover, by Lemma \ref{17} we can choose $s_n$ such that $s_{n}^{-1}F_n\cap s_{k}^{-1}F_k=\emptyset$ for every $k<n$. Set now
$$
K_0=\emptyset\quad\text{ and }\quad K_n=F_n\cap E_{n}'\cap s_n(\cap_{k>n}E_{k}')\quad \text{ for every } n\ge1.
$$
For fixed $n\ge1$, we have $F_n\subset E_n$ and  for every $k>n$, $F_n\subset s_nE_k$, thus
\begin{align*}
\mu(F_n\setminus K_n)&\le \mu(E_n\setminus E_{n}')+\mu(\cup_{k>n}s_n E_k\setminus s_nE_{k}')\\
&\le \sum_{k\ge n}\mu(E_k\setminus E_{k}')<\sum_{k\ge n}\dfrac{\delta_k}{2}<\sum_{k\ge n}\dfrac{\delta_n}{2^{k-n+1}}=\delta_n.
\end{align*}
Since the sets $s_{n}^{-1}F_n$ are pairwise  disjoint and $s_{k}^{-1}K_k\subset E_{n}'$ for every  $k<n$, we have
\begin{align*}
\sum_{n,k\ge0;n\neq k}\dfrac{|\lambda_n|^p}{|\lambda_k|^p}\|\omega\|_{p,s_ns_{k}^{-1}K_{k}}^p
&\le \sum_n \Big( \sum_{k<n} \dfrac{|\lambda_n|^p}{|\lambda_k|^p}\|\omega\|_{p,s_ns_{k}^{-1}K_{k}}^p
 + \sum_{k>n} \|T_{s_n}\|^p \dfrac{|\lambda_n|^p}{|\lambda_k|^p}\|\omega\|_{p,s_{k}^{-1}K_{k}}^p\Big)\\
&\le \sum_n \Big(\dfrac{n \,|\lambda_n|^p}{\min_{k<n}|\lambda_k|^p}\|\omega\|_{p,s_nE_{n}'}^p
 +\sum_{k>n}\|T_{s_n}\|^p \dfrac{|\lambda_n|^p}{|\lambda_k|^p}\|\omega\|_{p,s_{k}^{-1}E_{k}'}^p\Big)\\
&< \sum_n \Big(\dfrac{n\,|\lambda_n|^p}{\min_{k<n}|\lambda_k|^p}\|\omega\|_{\infty,s_nE_{n}'}^p\mu(E_{n}')
 +\sum_{k>n}C_{k}^p\dfrac{|\lambda_n|^p}{|\lambda_k|^p}\|\omega\|_{\infty,s_{k}^{-1}E_{k}'}^p\mu(E_{k}')\Big)\\ 
&\le \sum_n \Big(n\mkern2mu \e_n^p \mkern2mu \mu(E_{n}')
 +\sum_{k>n}C_{k}^p\,\e_k^p \mkern2mu \mu(E_{k}')\Big)\\ 
&<\sum_n \Big(\dfrac{1}{2^{n}}+\sum_{k>n}\dfrac{1}{2^{k}}\Big)<+\infty.
\end{align*}
\end{proof}

\begin{cor}
Let $(G,S,p,\omega,\Gamma)$ be admissible. If for every compact subset $K$ of $G$ there exists $\lambda\in \Gamma\setminus\{0\}$ such that
\[\underset{s\in S}{\inf}\max\left\{ \vert\lambda\vert\,\underset{sK}{\mathrm{ess }\sup }\,\omega\,;\, \dfrac{1}{\vert\lambda\vert}\,\underset{s^{-1}K}{\mathrm{ess }\sup}\,\omega\right\} =0,\]
then $L^p(G,\omega)$ has a $(\Gamma,S)$-dense vector.
\label{21}
\end{cor}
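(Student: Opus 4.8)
The plan is to deduce the corollary directly from Theorem \ref{suff-sup} by taking $E=K$. First I would fix an arbitrary compact set $K\subset G$ together with numbers $\e,\delta>0$; the goal is then to produce $s\in S$, $\lambda\in\Gamma\setminus\{0\}$ and a compact subset $E\subset K$ satisfying the hypotheses of Theorem \ref{suff-sup}. By the assumption of the corollary, there is a single $\lambda\in\Gamma\setminus\{0\}$, depending only on $K$, for which
\[
\underset{s\in S}{\inf}\,\max\Big\{ |\lambda|\,\underset{sK}{\mathrm{ess }\sup }\,\omega\,;\,\tfrac{1}{|\lambda|}\,\underset{s^{-1}K}{\mathrm{ess }\sup}\,\omega\Big\}=0.
\]
Unwinding the definition of the infimum for the prescribed $\e>0$, there exists $s\in S$ with both $|\lambda|\,\underset{sK}{\mathrm{ess }\sup }\,\omega<\e$ and $\tfrac{1}{|\lambda|}\,\underset{s^{-1}K}{\mathrm{ess }\sup}\,\omega<\e$.

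Next I would simply set $E=K$. Then $E$ is a compact subset of $K$ with $\mu(K\setminus E)=0<\delta$, and since $sE=sK$ and $s^{-1}E=s^{-1}K$, the two essential-supremum inequalities demanded by Theorem \ref{suff-sup} hold verbatim for this choice of $s$, $\lambda$ and $E$. As $K$, $\e$ and $\delta$ were arbitrary, the full hypothesis of Theorem \ref{suff-sup} is met, and that theorem then guarantees the existence of a $(\Gamma,S)$-dense vector in $L^p(G,\omega)$, which is the assertion of the corollary.

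I expect no real obstacle here: the corollary is in fact a special case tailored to Theorem \ref{suff-sup}, the only simplification being that one fixes a single $\lambda$ per compact set and lets $s$ vary to drive the maximum to zero. The one point worth stating carefully is the order of quantifiers, namely that $\lambda$ is selected before the infimum over $s$ is taken; but because that infimum equals zero, an admissible $s$ exists for every prescribed $\e$, which is precisely the input Theorem \ref{suff-sup} requires. The choice $E=K$ then discharges the measure condition trivially while preserving $sE=sK$ and $s^{-1}E=s^{-1}K$, so the reduction is immediate.
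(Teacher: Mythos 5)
Your proof is correct and matches the paper's intent exactly: the corollary is stated immediately after Theorem \ref{suff-sup} with no separate proof precisely because it is the special case $E=K$ (so $\mu(K\setminus E)=0<\delta$), with $\lambda$ fixed per compact set and $s$ chosen from the vanishing infimum to satisfy both essential-supremum bounds. Nothing further is needed.
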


If we know that all translations are bounded, on the left and on the right, then we get the following proposition. It implies  Proposition \ref{3} when $G$ is abelian.

\begin{prop}\label{MR}
Suppose that $(G,S,p,\omega,\Gamma)$ is an admissible tuple. Let $\omega$ be a continuous weight such that for every $s\in G$, we have
\[
M(s):=\underset{t\in G}{\sup}\,\dfrac{\omega(st)}{\omega(t)}<+\infty \quad\text{and}
\quad R(s):=\underset{t\in G}{\sup}\,\dfrac{\omega(ts)}{\omega(t)}<+\infty.
\]
Then the following conditions are equivalent:
\begin{enumerate}[label={$(\arabic*)$}]
\item  There exists a $(\Gamma,S)$-dense vector in $L^p(G,\omega)$.
\item For every compact set $K\subset G$, there exist sequences $(s_n)_{n\ge1}\subset S$ and $(\lambda_{n})_{n\ge1}\subset\Gamma\setminus\{0\}$ such that
\[\underset{n\to+\infty}{\lim} |\lambda_n|\,\underset{t\in K}{\sup}\,\omega(s_nt)=0 \quad\text{and}\quad \underset{n\to+\infty}{\lim}\,\frac{1}{|\lambda_n|}\,\underset{t\in K}{\sup}\,\omega(s_{n}^{-1}t)=0.\]
\item There exist sequences $(s_n)_{n\ge1}\subset S$ and $(\lambda_{n})_{n\ge1}\subset\Gamma\setminus\{0\}$ such that
\[\underset{n\to+\infty}{\lim}\,|\lambda_n|\,\omega(s_n)=0 \quad\text{and}\quad \underset{n\to+\infty}{\lim}\,\frac{1}{|\lambda_n|}\,\omega(s_{n}^{-1})=0.\]
\end{enumerate}
\label{25}
\end{prop}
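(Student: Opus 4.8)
The plan is to prove the cycle $(1)\Rightarrow(3)\Rightarrow(2)\Rightarrow(1)$, since two of the three arrows are short and the real work is in the first. For $(2)\Rightarrow(1)$ I would simply verify that condition $(2)$ supplies the hypothesis of Theorem \ref{suff-sup}: fixing a compact $K$ and $\e,\delta>0$, continuity of $\omega$ gives $\underset{s_nK}{\mathrm{ess\,sup}}\,\omega=\underset{t\in K}{\sup}\,\omega(s_nt)$, so for $n$ large enough the pair $(s_n,\lambda_n)$ together with $E=K$ (for which $\mu(K\setminus E)=0<\delta$) satisfies $|\lambda_n|\,\underset{s_nE}{\mathrm{ess\,sup}}\,\omega<\e$ and $|\lambda_n|^{-1}\underset{s_n^{-1}E}{\mathrm{ess\,sup}}\,\omega<\e$; Theorem \ref{suff-sup} then produces a $(\Gamma,S)$-dense vector. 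For $(3)\Rightarrow(2)$ I would use that both $M$ and $R$ are locally bounded (the statement for $M$ is \cite[Proposition 1.16]{E}, and the one for $R$ follows by the same argument applied to the weight $t\mapsto\omega(t^{-1})$): from $\omega(s_nt)\le R(t)\omega(s_n)$ and $\omega(s_n^{-1}t)\le R(t)\omega(s_n^{-1})$ one obtains, for any compact $K$ and $C_K:=\underset{t\in K}{\sup}\,R(t)<+\infty$, the bounds $|\lambda_n|\,\underset{t\in K}{\sup}\,\omega(s_nt)\le C_K|\lambda_n|\omega(s_n)$ and $|\lambda_n|^{-1}\underset{t\in K}{\sup}\,\omega(s_n^{-1}t)\le C_K|\lambda_n|^{-1}\omega(s_n^{-1})$, both of which tend to $0$.

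The heart of the argument is $(1)\Rightarrow(3)$. Let $f$ be a $(\Gamma,S)$-dense vector and fix a compact neighbourhood $V$ of $e$ with $0<\mu(V)<+\infty$. Applying Lemma \ref{9} with $K=V$, $\lambda=1$, $\e=1/n$ and an exhausting sequence of compact sets $L_n$, I would produce $\lambda_n\in\Gamma\setminus\{0\}$ and $s_n\in S\setminus L_n$ with $\|\lambda_n T_{s_n}f-\cchi_V\|_{p,\omega}\to0$; in particular $s_n$, and hence $s_n^{-1}$ as well as $s_nB$ for any fixed compact $B$, leaves every compact set. Since $\|f\|_{p,\omega}<+\infty$, the integrals $\int_{A_n}|f|^p\omega^p\,d\mu$ vanish whenever the sets $A_n$ escape to infinity. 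For the backward estimate I restrict the approximation to $V$: as $\omega$ is bounded below on the compact $V$, $\lambda_n T_{s_n}f\to1$ in the unweighted $L^p(V)$, so after the change of variables $u=s_n^{-1}t$ (left-invariance of $\mu$) one gets $|\lambda_n|^p\int_{s_n^{-1}V}|f|^p\,d\mu\to\mu(V)$. On $s_n^{-1}V$ one has $\omega\ge\omega(s_n^{-1})/C_V$ with $C_V=\underset{v\in V}{\sup}\,R(v^{-1})<+\infty$, whence
\[
\Big(\frac{\omega(s_n^{-1})}{C_V}\Big)^p|\lambda_n|^{-p}\mu(V)(1+o(1))\le\int_{s_n^{-1}V}|f|^p\omega^p\,d\mu\longrightarrow0,
\]
giving $|\lambda_n|^{-1}\omega(s_n^{-1})\to0$. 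For the forward estimate I fix once and for all a compact set $A$ with $\int_A|f|^p\,d\mu>0$ (possible since $f\neq0$). As $s_nA$ escapes to infinity it is eventually disjoint from $V$, where $\cchi_V$ vanishes, so $\int_{s_nA}|\lambda_n T_{s_n}f|^p\omega^p\,d\mu\le\|\lambda_n T_{s_n}f-\cchi_V\|_{p,\omega}^p\to0$; changing variables and using $\omega(s_nv)\ge\omega(s_n)/C_A$ on $A$ (with $C_A=\underset{v\in A}{\sup}\,R(v^{-1})$) yields $|\lambda_n|^p\omega(s_n)^p\int_A|f|^p\,d\mu\le C_A^p\int_{s_nA}|\lambda_n T_{s_n}f|^p\omega^p\,d\mu\to0$, i.e.\ $|\lambda_n|\omega(s_n)\to0$. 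The same sequence $(s_n,\lambda_n)$ thus realises both limits required in $(3)$.

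The main obstacle is exactly this simultaneity in $(1)\Rightarrow(3)$: one must extract both the forward decay $|\lambda_n|\omega(s_n)\to0$ and the backward decay $|\lambda_n|^{-1}\omega(s_n^{-1})\to0$ from one and the same approximating sequence, because condition $(3)$ demands that the two limits hold along a common $(s_n,\lambda_n)$. The mechanism that makes this possible is that a single approximation $\lambda_n T_{s_n}f\approx\cchi_V$ controls $f$ at once on the incoming piece $s_n^{-1}V$ (forcing $f\approx\lambda_n^{-1}$ there, which feeds the backward estimate) and on the outgoing piece $s_nA$ (where $T_{s_n}f$ must be small, which feeds the forward estimate), while the near-multiplicativity $\omega(s_n^{\pm1}v)\asymp\omega(s_n^{\pm1})$ coming from local boundedness of $R$ converts these integral bounds into pointwise statements about $\omega(s_n)$ and $\omega(s_n^{-1})$. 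Arranging $s_n\to\infty$, so that both pieces escape to infinity and the convergent integral $\int|f|^p\omega^p$ has vanishing tails there, is precisely what the strengthened density from Lemma \ref{9} provides.
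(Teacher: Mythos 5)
Your proof is correct, and for the main implication $(1)\Rightarrow(3)$ it takes a genuinely different route from the paper. The paper's $(2)\Rightarrow(1)$ and $(3)\Rightarrow(2)$ are exactly yours: Theorem \ref{18} for the first (where only the inequality $\mathrm{ess\,sup}_{s_nE}\,\omega\le\sup_{t\in K}\omega(s_nt)$ is needed, not the equality you assert, which can fail on compacts of measure zero), and the estimate $\omega(s^{\pm1}t)\le R(t)\,\omega(s^{\pm1})$ with $R$ locally bounded for the second. For $(1)\Rightarrow(3)$, however, the paper does not use Lemma \ref{9} directly; it invokes condition $(2)$ of Theorem \ref{theorem A} with the constant sequence $F_n=F$, $\delta_n=\mu(F)/4$, reads off from the convergent double series the two limits $|\lambda_n|\,\|\omega\|_{p,s_ns_1^{-1}K_1}\to0$ and $|\lambda_n|^{-1}\|\omega\|_{p,s_1s_n^{-1}K_n}\to0$, and then converts these local $L^p$-norms into the pointwise quantities $\omega(s_n)$, $\omega(s_n^{-1})$ via the bounds $M(s_1^{-1})\le C$ and $R\le C$ on $K_1^{-1}\cup K_1^{-1}s_1$ together with $\mu(K_n\cap K_1)>\mu(F)/2$. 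Your argument instead goes back to Lemma \ref{9}: a single approximation $\lambda_nT_{s_n}f\approx\cchi_V$ with $s_n$ forced outside an exhausting sequence of compacts, combined with left-invariance of $\mu$ and the vanishing of $\int_{A_n}|f|^p\omega^p\,d\mu$ along sets $A_n$ escaping to infinity, yields both decays along the same sequence. What each buys: the paper's route makes the proposition a short corollary of machinery already established (Theorem \ref{theorem A}), at the cost of invoking that heavier result; yours is more self-contained (only Lemma \ref{9} is needed), isolates the mechanism more transparently — the incoming piece $s_n^{-1}V$ forces the backward decay, the outgoing piece $s_nA$ the forward one — and gives the extra information that the sequence in $(3)$ can be taken with $s_n\to\infty$. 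Two trivial points to patch in your write-up: Lemma \ref{9} produces $\gamma\in\Gamma$, so you should note that $\lambda_n\neq0$ automatically once $1/n<\|\cchi_V\|_{p,\omega}$; and the existence of the exhausting sequence $(L_n)$ with every compact eventually inside $L_n$ uses second countability (hence hemicompactness) of $G$, which the admissibility assumption provides.
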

\begin{proof}
Note that $M$ or $R$ being finite implies already that $\omega$ can be chosen continuous \cite[Theorem 2.7]{feicht}, so that we can assume it from the beginning.
It is clear that $(2)\Rightarrow(3)$. By Theorem \ref{18}, we have $(2)\Rightarrow(1)$. Let us show first that $(3)\Rightarrow(2)$. Fix a compact set $K\subset G$. By \cite[Proposition 1.16]{E}, we know that $M$ and $R$ are locally bounded. It is easy to see that for $s\in S$ and $\lambda\in\Gamma\setminus\{0\}$,
\[
\max\{ |\lambda| \sup_{t\in K}\,\omega(st)\,;\,\frac1{|\lambda|} \sup_{t\in K}\,\omega(s^{-1}t)\}
 \le \sup_{t\in K} R(t)\, \max\{ |\lambda|\omega(s)\,;\,\frac{1}{|\lambda|}\omega(s^{-1})\},\]
thus $(2)$ and $(3)$ are equivalent. Let us show now that $(1)\Rightarrow(3)$. Let $F\subset G$ be a compact set of  nonzero measure. By applying condition $(2)$ of Theorem \ref{theorem A} with $F_n=F$ and  $\delta_n=\dfrac{\mu(F)}{4}$, we get  sequences $(s_n)_{n\ge1}\subset S$, $(\lambda_n)_{n\ge1}\subset\Gamma\setminus\{0\}$ and a sequence of compact sets $K_n\subset F$ such that $\mu(F\setminus K_n)<\delta_n$ and
\begin{equation}
\lim_{n\to+\infty} |\lambda_n|\|\omega\|_{p,s_ns_{1}^{-1}K_1}=0
\quad\text{ and }\quad
\lim_{n\to+\infty} \dfrac{1}{|\lambda_n|}\|\omega\|_{p,s_1s_{n}^{-1}K_n}=0.
\label{21}
\end{equation}
In particular, for every $n\ge1$ we have $\mu(K_n\cap K_1)>\dfrac{\mu(F)}{2}$. Let $C>0$ be such that $M(s_1^{-1})\le C$ and $R|_{K_{1}^{-1}\cup K_{1}^{-1}s_1}\le C$. 
Then
$$
\omega(s_n) = \inf_{t\in s_1^{-1}K_1} \omega(s_n t t^{-1})
 \le C \inf_{t\in s_1^{-1} K_1} \omega(s_n t) \le C\|\omega\|_{p,s_{n}s_{1}^{-1}K_1}\mu(K_1)^{-1/p}
$$
and
\begin{align*}
\omega(s_n^{-1}) & = \inf_{t\in K_1\cap K_n} \omega( s_1^{-1} s_1 s_n^{-1}t t^{-1} )
 \le C^2 \inf_{t\in K_1\cap K_n} \omega( s_1 s_n^{-1}t )
\\&\le C^{2}\|\omega\|_{p,s_1s_n^{-1}K_n}\mu(K_1\cap K_n)^{-1/p}<2C^2\mu(F)^{-1/p}\|\omega\|_{p,s_1s_n^{-1}K_n}.
\end{align*}
Combining the last inequalities with \eqref{21}, we obtain
$$
\max\{ |\lambda_n|\omega(s_n)\,;\,\frac1{|\lambda_n|}\omega(s_n^{-1})\}\underset{n\to+\infty}{\longrightarrow}0.
$$
\end{proof}

We can now easily obtain the following corollary, which provides for several types of $\Gamma$ a complete characterization of $(\Gamma,S)$-density only in terms of the weight.
\begin{cor}\label{Gamma-bd}
Suppose that $(G,S,p,\omega,\Gamma)$ is an admissible tuple. If $\omega$ is a continuous weight such that for every $s\in G$,
$$
M(s):=\underset{t\in G}{\sup}\,\dfrac{\omega(st)}{\omega(t)}<+\infty \quad\text{and}\quad 
R(s):=\underset{t\in G}{\sup}\,\dfrac{\omega(ts)}{\omega(t)}<+\infty,
$$
then the following conditions hold: 
\begin{enumerate}[label={$(\arabic*)$}]
\item  If $\Gamma\setminus\{0\}$ is bounded and bounded away from zero, there is a $(\Gamma,S)$-dense vector in  $L^p(G,\omega)$ if and only if $$\inf\limits_{s\in S}\max\{ \omega(s);\omega(s^{-1})\}=0.$$
\item There is a $([0,1],S)$-dense vector in  $L^p(G,\omega)$ if and only if \[\inf\limits_{s\in S} \max\{ \omega(s)\omega(s^{-1});\omega(s^{-1})\}=0.\]
\item There is a $([1,+\infty[,S)$-dense vector in  $L^p(G,\omega)$ if and only if \[\inf\limits_{s\in S} \max\{ \omega(s)\omega(s^{-1});\omega(s)\}=0.\]
\item There is a $(\C,S)$-dense vector in  $L^p(G,\omega)$ if and only if $\inf\limits_{s\in S} \omega(s)\,\omega(s^{-1})=0$.
\end{enumerate}
\label{29}
\end{cor}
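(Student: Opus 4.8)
The plan is to derive Corollary~\ref{29} directly from Proposition~\ref{MR}. The weight $\omega$ satisfies the hypotheses of that proposition (both $M(s)$ and $R(s)$ are finite for every $s\in G$), so condition~(1) of the corollary---the existence of a $(\Gamma,S)$-dense vector---is equivalent, by Proposition~\ref{MR}(3), to the existence of sequences $(s_n)\subset S$ and $(\lambda_n)\subset\Gamma\setminus\{0\}$ with $|\lambda_n|\,\omega(s_n)\to0$ and $\omega(s_n^{-1})/|\lambda_n|\to0$. Since an infimum of a nonnegative quantity is zero exactly when it admits a minimizing sequence, this is in turn equivalent to
\[
\inf_{s\in S}\;\inf_{r\in R_\Gamma}\;\max\Big\{r\,\omega(s),\ \tfrac{\omega(s^{-1})}{r}\Big\}=0,
\]
where $R_\Gamma:=\{|\lambda|:\lambda\in\Gamma\setminus\{0\}\}$. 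Writing $a=\omega(s)>0$ and $b=\omega(s^{-1})>0$, the whole corollary reduces to computing the inner infimum $m(s):=\inf_{r\in R_\Gamma}\max\{ra,\,b/r\}$ for each of the four choices of $\Gamma$ and checking that $\inf_s m(s)=0$ is equivalent to the advertised weight condition.

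Two of the cases are immediate. For $\Gamma=\mathbb{C}$ one has $R_\Gamma=(0,\infty)$, and balancing the two terms at $r=\sqrt{b/a}$ (legitimate as $\omega>0$) gives $m(s)=\sqrt{ab}$; hence $\inf_s m(s)=0$ iff $\inf_s\omega(s)\omega(s^{-1})=0$, which is~(4). If $\Gamma\setminus\{0\}$ is bounded and bounded away from zero, say $R_\Gamma\subset[c,C]$ with $0<c\le C$, I would argue with the sequences directly: from $r_n\ge c$ and $r_n a_n\to0$ one gets $a_n\to0$, and from $r_n\le C$ and $b_n/r_n\to0$ one gets $b_n\to0$, so $\max\{a_n,b_n\}\to0$; conversely, if $\max\{a_n,b_n\}\to0$ then a single fixed choice $\lambda_n\equiv\lambda_0\in\Gamma\setminus\{0\}$ works. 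This gives~(1).

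The substantive cases are $\Gamma=[0,1]$ and $\Gamma=[1,+\infty[$, where the unconstrained optimiser $\sqrt{b/a}$ may lie outside $R_\Gamma$ and is then pushed to the endpoint $r=1$. For $R_\Gamma=(0,1]$ a short analysis of the piecewise-monotone map $r\mapsto\max\{ra,b/r\}$ yields $m(s)=\sqrt{ab}$ when $b\le a$ and $m(s)=b$ when $b>a$. I would then match this against $h(s):=\max\{ab,b\}=b\max\{a,1\}$ by checking both directions: if $m(s_n)\to0$, splitting the indices according to whether $b_n\le a_n$ shows $h(s_n)\to0$ (using $b_n\le\sqrt{a_nb_n}$ in the first regime and $a_n<b_n\to0$ in the second); conversely $h(s_n)\to0$ forces both $b_n\to0$ and $a_nb_n\to0$, whence $m(s_n)\to0$ in either regime. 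This is~(2), and~(3) follows by the symmetric computation (interchanging $a$ with $b$, equivalently $r$ with $1/r$), which gives $m(s)=\sqrt{ab}$ for $b\ge a$ and $m(s)=a$ for $b<a$, matched against $\max\{ab,a\}=a\max\{b,1\}$.

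The only delicate point is the matching in these last two cases: $m(s)$ is not literally the advertised expression $h(s)$, so one cannot simply substitute. The step I expect to require the most care is verifying the equivalence $\inf_s m(s)=0\iff\inf_s h(s)=0$ in both directions, keeping the two regimes $b\le a$ and $b>a$ straight and using throughout that $\omega$ is strictly positive so that all the ratios and square roots make sense. Everything else is a routine one-variable optimisation.
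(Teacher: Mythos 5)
Your proposal is correct and follows essentially the same route as the paper: both reduce the corollary to condition (3) of Proposition \ref{MR} and then finish with elementary scalar manipulations of $|\lambda|$, $\omega(s)$ and $\omega(s^{-1})$. Your explicit computation of the constrained infimum $m(s)=\inf_{r\in R_\Gamma}\max\{r\omega(s),\omega(s^{-1})/r\}$ is just a more systematic packaging of the paper's ad hoc choices (the paper takes $\lambda_k=k\,\omega(s_k^{-1})$ in case (2) and $\lambda_k=\omega(s_k^{-1})^{1/2}\omega(s_k)^{-1/2}$ in case (4), leaving case (1) and the necessity directions to the reader, which you spell out).
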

\begin{proof}  In view of Proposition \ref{MR}, the case (1) is easy to check. Suppose that the condition on $\omega$ in (2) holds. For every $k\geqslant 1$ there exists $s_k\in S$ such that 
\[
\omega(s_k)\omega(s_k^{-1})\leqslant\frac1{k^2} \quad\text{ and }\quad \omega(s_k^{-1})\leqslant\frac1{k^2}.
\]
If we set $\lambda_k=k\,\omega(s_{k}^{-1})$ then $0\leqslant\lambda_k\leqslant\dfrac{1}{k}\leqslant1$ and
\[
\lambda_k\omega(s_k)=k\,\omega(s_k)\omega(s_{k}^{-1})\leqslant\dfrac{1}{k},
\qquad \frac1{\lambda_k}\omega(s_{k}^{-1})=\frac1k.
\]
Thanks to Proposition \ref{25}, $L^p(G,\omega)$ has a $([0,1],S)$-dense vector. The converse follows easily from Proposition \ref{25}. The proof of (3) is similar.
For (4), by Proposition \ref{MR}, the condition on $\omega$ is necessary. Assume now that $\underset{s\in S}{\inf} \omega(s)\,\omega(s^{-1})=0$. For every $k\geqslant 1$ there exists $s_k\in S$ such that 
\[\omega(s_k)\omega(s_{k}^{-1})\leqslant\dfrac{1}{k^2}.\]
Set $\lambda_k=\omega(s_{k}^{-1})^{1/2}\omega(s_k)^{-1/2}$, then
\[\lambda_k\omega(s_k)=\omega(s_{k}^{-1})^{1/2}\omega(s_k)^{1/2}\leqslant\dfrac{1}{k}\underset{k\to\infty}{\longrightarrow}0\]
and
\[\dfrac{1}{\lambda_k}\omega(s_{k}^{-1})=\omega(s_{k}^{-1})^{1/2}\omega(s_k)^{1/2}\leqslant\dfrac{1}{k}\underset{k\to\infty}{\longrightarrow}0,\]
so that by Proposition \ref{25}, $L^p(G,\omega)$ has a $(\mathbb{C},S)$-dense vector.
\end{proof}

The following example shows that we do need all translations bounded to have the equivalence (4) in Corollary \ref{29}. The following weight is $S$-admissible, but not $G$-admissible.
\begin{ex}
Let $G=\Z$, $S=\Z\setminus\N$ and let $\omega$ be the weight defined on $G$ by \[ \omega_{n}=\begin{cases}
 2^{2^n} &\text{ if } n\geqslant0\\
 2^{-2^{-n+1}}  &\text{ if } n<0
\end{cases}.
\]
Then, we have
\[\underset{n\in S}{\inf}\, \omega_{n}\omega_{-n}=0 \quad\text{and }\quad\underset{k\in \mathbb{Z}}{\sup}\,\dfrac{\omega_{n+k}}{\omega_k}<+\infty\quad\text{ for all }n\in S,\]
so that the condition (4) of Corollary \ref{29} holds. But $L^p(G,\omega)$ doesn't have a $(\C,S)$-dense vector,  since $\underset{n\to+\infty}{\liminf}\,\omega_{n+1}\omega_{-n+1}\neq0$ (see \cite[Theorem 1.38]{BM}). 
\end{ex}

\section{Commutative subgroups}\label{sec-abelian}

We keep the same assumptions on $G,S,p,\Gamma,\omega$ as before, see Definition \ref{assump}.
If the subgroup generated by $S$ is abelian, Theorem B below shows that the converse implication of Theorem \ref{18} is also true. For convenience, we will recall Theorem \ref{theorem B} with an additional equivalent condition.

\begin{Theorem B}
Suppose that $(G,S,p,\omega,\Gamma)$ is admissible. If the \textbf{subgroup generated by $S$ is abelian}, then the following conditions are equivalent:
\begin{enumerate}[label={$(\arabic*)$}]
\item There is a $(\Gamma,S)$-dense vector in  $L^p(G,\omega)$.
\item For any compact subset $F\subset G$ and $\e>0$, there are $s\in S$, $\lambda\in\Gamma\setminus\{0\}$ and a compact subset $E\subset F$ such that $\mu(F\setminus E)<\e$ and
\[
|\lambda|\, \underset{t\in E}{\mathrm{ess }\sup }\,\omega(st)<\e\quad\text{ and }\quad
 \frac1{|\lambda|}\, \underset{t\in E}{\mathrm{ess }\sup}\,\omega(s^{-1}t)<\e.
\]
\item 
For any compact subset $F\subset G$ and  $\e>0$, there are $s\in S$, $\lambda\in\Gamma\setminus\{0\}$ and a compact subset $E\subset F$ such that $\mu(F\setminus E)<\e$ and
\begin{equation}
|\lambda|\,\|\omega\|_{p,sE}<\e \quad\text{ and }
\quad \frac1{|\lambda|}\mkern1mu \|\omega\|_{p,s^{-1}E}<\e.
\end{equation}
\end{enumerate}
\label{2}
\end{Theorem B}
\begin{proof}[Proof of Theorem B]
According to Theorem \ref{18}, $(2)\Rightarrow(1)$ holds for any set $S$ and $S$-admissible weight. 

Let us show that $(1)\Rightarrow(3)$. Let $F$ be a compact subset of $G$ and $\e>0$. It is clear that we can assume that $F$ has a positive measure. For every $n\ge1$, set $F_n=F$ and $\delta_n=2^{-n}$. By Theorem \ref{theorem A}, there are $(s_n)_{n\ge1}\subset S$, $(\lambda_n)_{n\ge1}\subset\Gamma\setminus\{ 0\}$ and $E_n\subset F$ compact such that $\mu(F\setminus E_n)<\delta_n$ and
\[
\sum_{n,k\ge0;n\neq k}\dfrac{|\lambda_n|^p}{|\lambda_k|^p}\|\omega\|_{p,s_ns_{k}^{-1}E_{k}}^p<+\infty
\]
with $s_0=e$, $\lambda_0=1$ and $E_0=\emptyset$. In particular,
\begin{equation}
\underset{k\to+\infty}{\lim}\dfrac{1}{|\lambda_k|^p}\|\omega\|_{p,s_{k}^{-1}E_{k}}^p=0.
\label{19}
\end{equation}
Let us fix $k\ge1$ such that $\delta_k<\dfrac{\e}{2}$ and set $C:=\|T_{s_k}\|$. We have
\begin{align*}
\|\omega\|_{p,s_nE_k}^p=\int_{s_ks_ns_{k}^{-1}E_k}\omega(t)^p\mathrm{d}\mu(t)\le C^p\|\omega\|_{p,s_ns_{k}^{-1}E_k}^p
\end{align*}
Since $|\lambda_n|\|\omega\|_{p,s_ns_{k}^{-1}E_k}\underset{n\to+\infty}{\longrightarrow}0$, it follows
\begin{equation}
|\lambda_n|\|\omega\|_{p,s_nE_k}\underset{n\to+\infty}{\longrightarrow}0.
\label{20}
\end{equation}
By \eqref{19} and \eqref{20} there exists $n>k$ such that
\[
\frac1{|\lambda_n|}\|\omega\|_{p,s_n^{-1}E_n}<\e \quad\text{ and }\quad
|\lambda_n|\|\omega\|_{p,s_nE_k}<\e.
\]
If we set $E=E_k\cap E_n$, $s=s_n$ and $\lambda=\lambda_n$, we have $\mu(F\setminus E)\le \mu(F\setminus E_k) +\mu(F\setminus E_n)<\delta_k+\delta_n\le 2\delta_k<\e$, and
\[  
|\lambda|\,\|\omega\|_{p,sE}<\e \quad\text{ and }\quad
\frac1{|\lambda|}\|\omega\|_{p,s^{-1}E} <\e.
\]

Let us show now that $(3)\Rightarrow(2)$. Let $F\subset G$ be a compact subset of $G$ and $\e>0$. Let $\eta>0$ be such that $4\eta<\e$. There exist $s\in S$, $\lambda\in\Gamma\setminus\lbrace0\rbrace$ and a compact subset  $K\subset F$ such that $\mu(F\setminus K)<\eta$ and 
$$
|\lambda| \|\omega\|_{p,sK} <\eta^{1+\frac1p}, \qquad \frac1{|\lambda|}\|\omega\|_{p,s^{-1}K}<\eta^{1+\frac1p}.
$$
Set
$$
E_1=\{ t\in K:\, \omega(st)\le \frac\eta{|\lambda|} \}, \qquad
E_2=\{ t\in K:\, \omega(s^{-1}t)\le \eta|\lambda|\},
$$
then
$$
\|\omega\|_{p,sK}^p \ge \frac{\eta^p}{|\lambda|^p} \mu(K\setminus E_1) \quad \text{and }\quad
 \|\omega\|_{p,s^{-1}K}^p \ge \eta^p|\lambda|^p \mu(K\setminus E_2).
$$
Hence $\mu(K\setminus E_1)<\eta$ and $\mu(K\setminus E_2)<\eta$, so that
$$
\mu(K\setminus (E_1\cap E_2))<2\eta.
$$
Let now $E\subset E_1\cap E_2$ be a compact subset such that $\mu((E_1\cap E_2)\setminus E)<\eta$.
Then $\mu(F\setminus E)<4\eta<\e$,
and
$$
|\lambda|\, \underset{t\in E}{\mathrm{ess }\sup }\,\omega(st)\le \eta<\e,\qquad
 \frac1{|\lambda|}\, \underset{t\in E}{\mathrm{ess }\sup}\,\omega(s^{-1}t)\le \eta <\e.
$$
\end{proof}

\begin{rem}
The condition (2) is independent of $p$, which means that it applies to any $p$. We have however to make sure that the usual condition $\omega\in L^p_{loc}(G)$ is satisfied, and this one does depend on $p$.
\end{rem}

For special types of $\Gamma$, one can characterize $(\Gamma, S)$-density only in terms of the weight, similarly to Corollary \ref{Gamma-bd}.

\begin{cor}
Suppose that $(G,S,p,\omega,\Gamma)$ is an admissible tuple. If the \textbf{subgroup generated by $S$ is abelian}, then:
\begin{enumerate}[label={$(\arabic*)$}]
\item  There is an $S$-dense vector in  $L^p(G,\omega)$ if and only if for any compact subset $F\subset G$ and $\e>0$, there are $s\in S$ and a compact subset $E\subset F$ such that $\mu(F\setminus E)<\e$ and
\[\underset{t\in E}{\mathrm{ess }\sup }\,\omega(st)<\e,\qquad \underset{t\in E}{\mathrm{ess }\sup}\,\omega(s^{-1}t)<\e.\]
\item  There is a $([0,1],S)$-dense vector in  $L^p(G,\omega)$ if and only if for any compact subset $F\subset G$ and $\e>0$, there are $s\in S$ and a compact subset $E\subset F$ such that $\mu(F\setminus E)<\e$ and
\[
\underset{t\in E}{\mathrm{ess }\sup}\,\omega(s^{-1}t)<\e, \qquad
\underset{t\in E}{\mathrm{ess }\sup }\,\omega(st)\, \underset{t\in E}{\mathrm{ess }\sup}\,\omega(s^{-1}t)<\e.
\]
\item There is a $([1,+\infty[,S)$-dense vector in  $L^p(G,\omega)$ if and only if for any compact subset $F\subset G$ and $\e>0$, there are $s\in S$ and a compact subset $E\subset F$ such that $\mu(F\setminus E)<\e$ and
\[
\underset{t\in E}{\mathrm{ess }\sup }\,\omega(st)<\e, \qquad
\underset{t\in E}{\mathrm{ess }\sup }\,\omega(st)\,\underset{t\in E}{\mathrm{ess }\sup}\,\omega(s^{-1}t)<\e.
\]
\item  There is a $(\mathbb{C},S)$-dense vector in  $L^p(G,\omega)$ if and only if  for any compact subset $F\subset G$ and $\e>0$, there are $s\in S$ and a compact subset $E\subset F$ such that $\mu(F\setminus E)<\e$ and
\[
\underset{t\in E}{\mathrm{ess }\sup }\,\omega(st)\, \underset{t\in E}{\mathrm{ess }\sup}\,\omega(s^{-1}t)<\e.
\]
\end{enumerate}
\label{26}
\end{cor}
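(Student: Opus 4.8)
The plan is to read off every case directly from Theorem \ref{theorem B}, whose condition (2) asserts (under the standing abelian hypothesis) that $L^p(G,\omega)$ has a $(\Gamma,S)$-dense vector if and only if, for every compact $F$ and every $\e>0$, there are $s\in S$, $\lambda\in\Gamma\setminus\{0\}$ and a compact $E\subset F$ with $\mu(F\setminus E)<\e$ and
\[
|\lambda|\,a<\e \qquad\text{and}\qquad \tfrac1{|\lambda|}\,b<\e,
\]
where I abbreviate $a:=\operatorname{ess\,sup}_{t\in E}\omega(st)$ and $b:=\operatorname{ess\,sup}_{t\in E}\omega(s^{-1}t)$. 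The key observation is that, once $s$ and $E$ (hence $a,b\ge0$) are fixed, $\lambda$ enters only through $|\lambda|$, and the two inequalities are solvable in $\lambda$ precisely when the allowed range of $|\lambda|$ meets the open interval $(b/\e,\,\e/a)$. Thus each case reduces to intersecting this interval with $|\Gamma|\setminus\{0\}$ and recording the resulting constraint on $a$ and $b$ alone; since the whole condition is quantified over all $\e>0$, the passage between the $\e^2$-bounds produced by the interval analysis and the $\e$-bounds appearing in the statement is a harmless rescaling.

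For (1), note that $S$-density coincides with $(\{1\},S)$-density, so Theorem \ref{theorem B} applied with $\Gamma=\{1\}$ forces $\lambda=1$ and reduces condition (2) to $a<\e$ and $b<\e$, which is exactly the asserted condition.

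For (4), $\Gamma=\C$ lets $|\lambda|$ be any positive number, so the interval $(b/\e,\e/a)$ is attainable iff it is nonempty, i.e. iff $ab<\e^2$. On the forward side one simply multiplies the two inequalities of Theorem \ref{theorem B}; for the converse one picks any $|\lambda|\in(b/\e,\e/a)$ and any $\lambda\in\C$ of that modulus. For (2), $\Gamma=[0,1]$ confines $|\lambda|$ to $(0,1]$, so the requirement becomes $(b/\e,\e/a)\cap(0,1]\neq\emptyset$; a short endpoint computation shows this holds exactly when $ab<\e^2$ and $b<\e$, and on the forward side the extra bound $b<\e$ follows from $b=|\lambda|\cdot(b/|\lambda|)\le b/|\lambda|<\e$ because $|\lambda|\le1$. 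Case (3), $\Gamma=[1,+\infty[$, is the mirror image: $|\lambda|\ge1$ forces $a\le|\lambda|\,a<\e$, and the nonemptiness of $(b/\e,\e/a)\cap[1,+\infty[$ amounts to $ab<\e^2$ together with $a<\e$.

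The steps, in order, are: (i) fix the elementary ``interval'' reformulation above; (ii) settle (1) via $\Gamma=\{1\}$; (iii) settle (4); (iv) settle (2) and, by symmetry, (3). The only point needing real care is the boundary analysis in (2) and (3): one must verify that $(b/\e,\e/a)\cap(0,1]$ (resp. $\cap\,[1,+\infty[$) is nonempty exactly under ``$ab<\e^2$ and $b<\e$'' (resp. ``$ab<\e^2$ and $a<\e$''), by distinguishing the sub-cases $\e/a\le1$ and $\e/a>1$, and disposing of the degenerate possibilities $a=0$ or $b=0$, which only arise when $\mu(E)=0$; as in the proof of Theorem \ref{theorem B} one may assume $\mu(F)>0$, so that $a,b\in(0,+\infty)$. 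Everything else is the routine $\e\leftrightarrow\e^2$ rescaling licensed by the quantifier over $\e$.
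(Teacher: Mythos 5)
Your proof is correct, and at its core it follows the same route as the paper: every case is read off from condition (2) of Theorem \ref{theorem B}. The differences are organizational, and mostly to your advantage. For part (1), the paper simply cites \cite[Theorem 10]{AK}, whereas you obtain it internally by applying Theorem \ref{theorem B} with $\Gamma=\{1\}$ (using that $S$-density is the same as $(\{1\},S)$-density); this is legitimate, since $(G,S,p,\omega,\{1\})$ is an admissible tuple, and it makes the corollary self-contained. For the converse implications in (2)--(4), the paper exhibits explicit scalars ($\lambda_k=k\,d_k$ in case (2), $\lambda_k=d_k^{1/2}c_k^{-1/2}$ in case (4), with the convention $\lambda_k=1$ in the degenerate cases), while you characterize exactly when an admissible modulus exists, namely as nonemptiness of $(b/\e,\,\e/a)$ intersected with $(0,1]$, $[1,+\infty)$ or $(0,+\infty)$; the two arguments are equivalent, yours unifying the four cases at the cost of the endpoint analysis you correctly flag and carry out. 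One small point to tighten: assuming $\mu(F)>0$ alone does not force $a,b\in(0,+\infty)$, since for $\e\ge\mu(F)$ the set $E$ could still be null; you also need $\e<\mu(F)$, which is harmless because the conditions in the statement are monotone in $\e$ (validity for smaller $\e$ implies validity for larger $\e$) --- the same monotonicity you already invoke for the $\e\leftrightarrow\e^2$ rescaling, and the same device by which the paper ignores the finitely many degenerate indices $k$.
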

\begin{proof} The assertion $(1)$ is a part of \cite[Theorem 10]{AK}. In (2), if there is a $([0,1],S)$-dense vector, then, as $1/|\lambda|\ge1$, the condition on $\omega$ follows from (2) of Theorem \ref{theorem B}.
Let us prove the converse. Suppose that the condition on $\omega$ in (2) holds; 
we will check the condition (2) from Theorem \ref{theorem B}.
 Let $F\subset G$ be a compact set. If $\mu(F)=0$, the inequalities in (2) of Theorem \ref{theorem B} hold with $E=\emptyset$ and any $s$ and $\lambda$. We can assume therefore that $\mu(F)>0$. For every $k\geqslant 1$ there exist $s_k\in S$ and a compact subset $E_k\subset F$ such that $\mu(F\setminus E_k)\leqslant\frac1k$
\[
\underset{t\in E_k}{\mathrm{ess }\sup}\,\omega(s_k^{-1}t)<\frac1{k^2} \quad\text{ and }\quad
\underset{t\in E_k}{\mathrm{ess }\sup }\,\omega(s_kt)\, 
\underset{t\in E_k}{\mathrm{ess }\sup}\,\omega(s_k^{-1}t)<\frac1{k^2}.
\]
Set $c_{k}=\underset{t\in E_k}{\mathrm{ess }\sup }\,\omega(s_kt)$, $d_k=\underset{t\in E_k}{\mathrm{ess }\sup}\,\omega(s_{k}^{-1}t)$. For $k$ big enough, the measure of $E_k$ is positive so that $c_k>0$ and $d_k>0$. Set $\lambda_k=k d_{k}$ (If $d_k=0$, which may happen for a finite number of $k$, we set $\lambda_k=1$). Thus for each $k$, we have $\lambda_k\in(0,1]$,
\[
\lambda_k c_k\underset{k\to\infty}{\longrightarrow}0 \quad\text{and}\quad
\frac1{\lambda_k}d_k\underset{k\to\infty}{\longrightarrow}0.
\]
Thanks to the assertion (2) of Theorem \ref{theorem B}, $L^p(G,\omega)$ has a $([0,1],S)$-dense vector. The proof of (3) is similar. For (4), the direct implication follows immediately from (2) of Theorem \ref{theorem B}. Let us prove  that the condition on $\omega$ in (4) implies that $L^p(G,\omega)$ has a $(\mathbb{C},S)$-dense vector. We will show that the assertion (2) of Theorem \ref{theorem B} holds. Let $F\subset G$ be a compact set, and as in (2), we can assume that $\mu(F)>0$. For every $k\geqslant 1$ there exist $s_k\in S$ and a compact subset $E_k\subset F$ such that $\mu(F\setminus E_k)\leqslant\dfrac{1}{k}$ and 
\[\underset{t\in E_k}{\mathrm{ess }\sup }\,\omega(s_kt)\, \underset{t\in E_k}{\mathrm{ess }\sup}\,\omega(s_{k}^{-1}t)<\dfrac{1}{k}.\]
Set $c_{k}=\underset{t\in E_k}{\mathrm{ess }\sup }\,\omega(s_kt)$, $d_k=\underset{t\in E_k}{\mathrm{ess }\sup}\,\omega(s_{k}^{-1}t)$ and $\lambda_k=d_{k}^{1/2}c_{k}^{-1/2}$ (or $\lambda_k=1$ if $c_k=0$; as in (2), this may happen only for a finite number of $k$). Now
\[\lambda_k c_k\underset{k\to\infty}{\longrightarrow}0 \quad\text{ and } \quad \dfrac{1}{\lambda_k}d_k\underset{k\to\infty}{\longrightarrow}0.\]
Thanks to the assertion (2) of Theorem \ref{theorem B}, $L^p(G,\omega)$ has a $(\mathbb{C},S)$-dense vector.
\end{proof}

{\textbf{Acknowledgements.}}
 The first named author gratefully acknowledges many helpful suggestions, comments and remarks of his supervisors Evgeny Abakumov and St\'{e}phane Charpentier during the preparation of this paper.
 
This work was partially supported by the French ``Investissements d'Avenir'' program, project ISITE-BFC (contract ANR-15-IDEX-03).

\end{document}